\def\fnum@figure#1{\figurename\nobreakspace\thefigure
       \hspace{0.6em}}                                     
\begin{document}
\newtheorem{theorem}{Theorem}
\newtheorem{corollary}[theorem]{Corollary}
\newtheorem{definition}[theorem]{Definition}
\newtheorem{conjecture}[theorem]{Conjecture}
\newtheorem{problem}[theorem]{Problem}
\newtheorem{lemma}[theorem]{Lemma}
\newtheorem{proposition}[theorem]{Proposition}
\newtheorem{question}[theorem]{Question}
\newenvironment{proof}{\noindent {\bf Proof.}}
                      {\hfill\rule{2mm}{2mm}\par\medskip}
\newcommand{\remark}{\medskip\par\noindent {\bf Remark.~~}}

\newcommand{\JCTB}{{\it J. Combin. Theory Ser. B.}, }
\newcommand{\JCT}{{\it J. Combin. Theory}, }
\newcommand{\JGT}{{\it J. Graph Theory}, }
\newcommand{\ComHung}{{\it Combinatorica}, }
\newcommand{\DAM}{{\it Discrete Applied Math.},}
\newcommand{\DM}{{\it Discrete Math.}, }
\newcommand{\ARS}{{\it Ars Combin.}, }
\newcommand{\SIAMDM}{{\it SIAM J. Discrete Math.}, }
\newcommand{\SIAMADM}{{\it SIAM J. Algebraic Discrete Methods}, }
\newcommand{\SIAMC}{{\it SIAM J. Comput.}, }
\newcommand{\ConAMS}{{\it Contemp. Math. AMS}, }
\newcommand{\TransAMS}{{\it Trans. Amer. Math. Soc.}, }
\newcommand{\AnDM}{{\it Ann. Discrete Math.}, }
\newcommand{\ConNum}{{\it Congr. Numer.}, }
\newcommand{\CJM}{{\it Canad. J. Math.}, }
\newcommand{\JLMS}{{\it J. London Math. Soc.}, }
\newcommand{\PLMS}{{\it Proc. London Math. Soc.}, }
\newcommand{\PAMS}{{\it Proc. Amer. Math. Soc.}, }
\newcommand{\JCMCC}{{\it J. Combin. Math. Combin. Comput.}, }

\long\def\longdelete#1{} \baselineskip 21 pt  

\title{\bf Path covering number and  {\boldmath $L(2,1)$}-labeling number of graphs
\thanks{Supported in part by National Natural
Science Foundation of China (No.10971248 ) and the Fundamental Research
Funds for the Central Universities.}}
\author{Changhong Lu \ \ and\ \  Qing Zhou \\
   Department of Mathematics,\\
   East China Normal University,\\
   Shanghai 200241, P. R. China}

\maketitle

\begin{abstract}
  A {\it path covering} of a graph $G$ is a set of vertex disjoint
paths of $G$ containing all the vertices of $G$. The {\it path
covering number} of $G$, denoted by $P(G)$, is the minimum number of
paths in a path covering of $G$.  An {\sl $k$-$L(2,1)$-labeling} of
a graph $G$ is a mapping $f$ from $V(G)$ to the set
$\{0,1,\ldots,k\}$ such that $|f(u)-f(v)|\ge 2$ if $d_G(u,v)=1$ and
$|f(u)-f(v)|\ge 1$ if $d_G(u,v)=2$. The {\sl $L(2,1)$-labeling
number $\lambda (G)$} of $G$ is the smallest number $k$ such that
$G$ has a $k$-$L(2,1)$-labeling.   The purpose of this paper is to
study path covering number and  $L(2,1)$-labeling number of graphs.
Our main work extends most of results in [On island sequences of
labelings with a condition at distance two, Discrete Applied Maths
158 (2010), 1-7] and can answer an open problem in [On the structure
of graphs with non-surjective $L(2,1)$-labelings, SIAM J. Discrete
Math. 19 (2005), 208-223].

\bigskip
\noindent {\bf Keywords.} $L(2,1)$-labeling, Path covering number,
Hole index，Algorithm
\end{abstract}

\section{Introduction}                        

A {\it path covering} of a graph $G$ is a set of vertex disjoint
paths of $G$ containing all the vertices of $G$. The {\it path
covering number} of $G$, denoted by $P(G)$, is the minimum number of
paths in a path covering of $G$. The {\it minimum path covering} of
$G$ is a path covering  with size $P(G)$. The path covering problem
is to find a minimum path covering of a graph. The path covering
problem has received some alternative names in the literature, such
as optimal path cover \cite{ar1990, sssr1993,wong1999} and path
partition \cite{yan1994,yc1994}. It is evident that the path
covering problem for general graphs is {\em NP}-complete since
finding a path covering, consisting of a single path, corresponds
directly to the Hamiltonian path problem. Polynomial-time algorithms
to  solve the path covering problem have been known for a few special classes of graphs, including trees
\cite{bcm1974,gh1974,mt1975,sl1979}, block graphs \cite{wong1999,yc1994},
interval graphs \cite{ar1990}, circular-arc graphs \cite{hc2006},
cographs \cite{lop1995}, bipartite permutation graphs
\cite{sssr1993}, cocomparability graphs \cite{ddks1992} and
distance-hereditary graphs \cite{hc2007}. The path covering problem
has many practical applications in different areas, including
 mapping parallel programs to parallel architectures \cite{mw1991},
 code optimization \cite{bg1977} and program testing \cite{nh1979}.

It is known that  for a connected graph $G$, there is a spanning
tree $T$ of  $G$ such that $P(G)= P(T)$ \cite{bcm1974}.  Hence, it
is important to determine the path covering number of trees. There
are polynomial-time algorithms to determine the path cover number of
trees, surprisingly, however,
almost no exact values for the path covering number of special
families of trees are known.

 The problem of vertex labeling with a condition at distance two,
 first studied by Griggs and Yeh \cite{gy1992}, is a variation of the $T$-coloring problem introduced
by Hale \cite{h1980}.  An {\it $L(2,1)$-labeling} of a graph $G$ is
a mapping $f$ from the vertex set $V(G)$ to the set of all
nonnegative integers such that $|f(x)-f(y)|\ge 2$ if $d_G(x,y)=1$
and $|f(x)-f(y)|\ge 1$ if $d_G(x,y)=2$, where $d_G(x,y)$ denotes the
distance between the pair of vertices $x,y$. A {\it
$k$-$L(2,1)$-labeling} is an $L(2,1)$-labeling such that no label is
greater than $k$. The {\it $L(2,1)$-labeling number} $\lambda (G)$
of $G$ is the smallest number $k$ such that $G$ has a
$k$-$L(2,1)$-labeling. A $\lambda (G)$-$L(2,1)$-labeling is referred
to simply as a $\lambda$-labeling. It is shown that the
$L(2,1)$-labeling problem is {\em NP}-complete \cite{gmw1994}. In general, it is
hard to determine $\lambda$ even for special graphs. The reader may
consult \cite{s1994,wgm1995,yeh2006,zhou2006,zhou2007} for known results on $\lambda$. The reader is referred
to \cite{Cal2006} for a survey and \cite{ck1996,
gy1992,vls1998} for background information on this
problem.


The following elegant result, proved by Georges, Mauro and
Whittlesey \cite{gmw1994}, explored the relation between
$L(2,1)$-labeling problem and path covering problem.

\begin{theorem} (cf. Theorem 1.1 in \cite{gmw1994}) \label{thm1}
Suppose that $G$ is a graph of $n$ vertices. Let $G^c$ be the
complement of $G$. \\
(1)\ \ $\lambda(G)\le n-1$ if and only if $P(G^c)=1$; \\
(2)\ \ $\lambda(G)=n+P(G^c)-2$ if and only if $P(G^c)\ge 2$.
\end{theorem}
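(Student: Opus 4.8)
The plan is to build a dictionary between $L(2,1)$-labelings of $G$ and path coverings of $G^c$, resting on one elementary observation: if $f$ is an $L(2,1)$-labeling and $|f(u)-f(v)|\le 1$, then $u$ and $v$ cannot be adjacent in $G$ (adjacency would force $|f(u)-f(v)|\ge 2$), so $uv\in E(G^c)$; moreover if $f(u)=f(v)$ then $d_G(u,v)\ge 3$. I would first dispose of the ``easy'' inequalities $\lambda(G)\le n-1$ when $P(G^c)=1$ and $\lambda(G)\le n+P(G^c)-2$ when $P(G^c)\ge 2$. Given a minimum path covering of $G^c$ into paths $Q_1,\dots,Q_p$, I lay the paths on the integer line in order, giving the vertices of $Q_1$ the labels $0,1,2,\dots$, then skipping one integer, then labeling $Q_2$ with the next block of consecutive integers, and so on. Consecutive vertices on a path are adjacent in $G^c$, hence non-adjacent in $G$, so consecutive labels never violate the distance-one condition; the single skipped label between blocks guarantees that the last vertex of one path and the first of the next differ by at least $2$, and every remaining pair differs by at least $2$ automatically. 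This produces an $L(2,1)$-labeling whose largest label is $n+(p-1)-1=n+p-2$, which is $n-1$ when $p=1$.

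For the reverse inequalities I would run the correspondence backwards. Starting from an optimal labeling $f$ with largest label $k=\lambda(G)$, sort the vertices by label value and cut the sorted sequence wherever two consecutive vertices have labels differing by at least $2$. By the opening observation each uncut consecutive pair is an edge of $G^c$, so each resulting block is a path of $G^c$; these blocks form a path covering of $G^c$ into, say, $R$ parts. Writing $u$ for the number of distinct labels used, a direct span count gives $k\ge \ell_{\max}-\ell_{\min}\ge (u-1)+(R-1)$, i.e.\ $R\le k-u+2$, because the $u$ used values occupy $R$ runs separated by $R-1$ genuine gaps. If $f$ were injective ($u=n$) this already yields $P(G^c)\le R\le k-n+2$, hence $\lambda(G)\ge n+P(G^c)-2$, and combined with the first half finishes both parts.

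The main obstacle is precisely that $f$ need not be injective: repeated labels make $u<n$ and the bound $R\le k-u+2$ is then too weak by exactly $n-u$. Here I would exploit the second half of the opening observation. Whenever a label is shared by two vertices $x,x'$ we have $d_G(x,x')\ge 3$, so no vertex of $G$ is adjacent to both; equivalently, in $G^c$ every other vertex is adjacent to $x$ or to $x'$. This global domination lets a pair of equally-labeled vertices act as a bridge: a whole block of the covering can be spliced between $x$ and $x'$, merging two blocks across a gap into a single path of $G^c$. Accounting for this carefully, each of the $n-u$ surplus equally-labeled vertices can absorb one gap, so the $R$ blocks can be recombined into at most $R-(n-u)$ paths; with $R\le k-u+2$ this gives $P(G^c)\le k-n+2$, exactly as needed (and $\le 1$, hence $=1$, in the regime $k\le n-1$ of part~(1)). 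I expect the delicate point to be verifying that these bridgings can be carried out simultaneously and consistently for several repeated labels and several gaps at once; the saving feature is that the domination property is global, so any surplus vertex can bridge any gap, and a clean counting argument should make the simultaneous splicing legitimate.
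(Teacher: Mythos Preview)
The paper does not actually prove this theorem; it merely quotes it from Georges--Mauro--Whittlesey \cite{gmw1994} and uses it as a tool. So there is no ``paper's own proof'' to compare against, and I can only comment on the correctness of your sketch on its own terms.

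Your forward direction (turning a minimum path covering of $G^c$ into an $L(2,1)$-labeling of $G$) is exactly right and is the standard construction.

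Your backward direction has a real gap in the splicing step. You want to take a block $C$ (a path in $G^c$) and insert it between two equally-labeled vertices $x,x'$, producing a longer path $\cdots x\,C\,x'\cdots$. For that you need \emph{one} endpoint of $C$ adjacent to $x$ and the \emph{other} endpoint adjacent to $x'$ in $G^c$. But the domination property you cite only says that each endpoint of $C$ is adjacent to \emph{at least one} of $x,x'$; nothing prevents both endpoints from being adjacent to $x$ alone and non-adjacent to $x'$ (this simply means $x'$ is $G$-adjacent to both endpoints of $C$, which is perfectly consistent with $d_G(x,x')\ge 3$). In that situation your splice fails, and reversing $C$ does not help. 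So the assertion that ``any surplus vertex can bridge any gap'' is not justified as stated, and the concluding ``clean counting argument'' does not go through.

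A second, smaller issue: your target inequality $P(G^c)\le k-n+2$ cannot be literally correct in the regime $k<n-1$ (it would force $P(G^c)\le 0$), so what you really need is $P(G^c)\le\max\{1,\,k-n+2\}$; even granting the splicing, you should track the count as ``merge down to one path or to $R-(n-u)$ paths, whichever is larger.''

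The original GMW argument avoids the splicing difficulty by first proving the equivalence for \emph{injective} labelings (where your sorted-sequence construction works cleanly) and then showing separately that whenever $\lambda(G)\ge n-1$ there exists an injective optimal labeling. If you want to push your direct combinatorial approach through, you will need a more careful matching between surplus label-classes and gaps, not just a global domination count.
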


A $k$-$L(2,1)$-labeling is said to have a {\it hole} $h$ with $1\le
h\le k-1$, if the label $h$ is not used. The minimum number of holes
over all $\lambda$-labelings of a graph $G$ is called the {\it hole
index} of $G$ and is denoted by $\rho (G)$. Several papers
\cite{atw2007,fr2003,fr2006,gm2005,gm2005-2,kst2006,lcz2007,lz2007}
have studied $\rho (G)$ and have investigated its connections with
$\lambda (G)$ and $\Delta (G)$, the maximum degree of $G$.

The following result by Georges and Mauro \cite{gm2005} established
relation between $\rho (G)$ and $P(G^c)$.

\begin{theorem} \cite{gm2005} \label{thm}
Let $G$ be a graph on $n$ vertices and $\lambda (G)\ge n-1$. Then
$\rho (G)=P(G^c)-1$.
\end{theorem}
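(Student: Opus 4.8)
The plan is to set $p=P(G^c)$ and prove the two inequalities $\rho(G)\le p-1$ and $\rho(G)\ge p-1$ separately, using Theorem~\ref{thm1} to pin down the value of $\lambda(G)$. If $p=1$ then $G^c$ has a Hamiltonian path, and Theorem~\ref{thm1}(1) together with the hypothesis $\lambda(G)\ge n-1$ forces $\lambda(G)=n-1$; the Hamiltonian path reads off a bijective labeling onto $\{0,1,\dots,n-1\}$ with no holes, so $\rho(G)=0=p-1$. Hence I may assume $p\ge 2$, in which case Theorem~\ref{thm1}(2) gives $\lambda(G)=n+p-2$, so that any $\lambda$-labeling uses values from $\{0,1,\dots,n+p-2\}$.

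For the upper bound I would build an explicit $\lambda$-labeling from a minimum path covering $P_1,\dots,P_p$ of $G^c$. Concatenate the paths, assigning consecutive integers to consecutive vertices along each $P_i$ starting from $0$, and skipping exactly one label between the end of $P_i$ and the start of $P_{i+1}$. This is injective with largest label $(n-1)+(p-1)=n+p-2=\lambda(G)$ and exactly $p-1$ skipped interior values, i.e. $p-1$ holes. It is a valid $L(2,1)$-labeling because two vertices whose labels differ by $1$ are consecutive on a common $P_i$, hence adjacent in $G^c$ and non-adjacent in $G$; and the distinctness of all labels takes care of the distance-two condition. Thus $\rho(G)\le p-1$.

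The reverse inequality is the heart of the argument. Starting from any $\lambda$-labeling $f$ with $h$ holes, I first record two structural facts that follow directly from the labeling conditions: any two vertices sharing a label lie at distance at least $3$ in $G$, so the class $V_j=f^{-1}(j)$ is a clique in $G^c$; and any vertex labeled $j$ is non-adjacent in $G$ to any vertex labeled $j+1$, so $V_j$ and $V_{j+1}$ are completely joined in $G^c$. Group the used labels into maximal runs of consecutive integers, say $r$ runs. Within one run $a,a+1,\dots,b$ I would snake a single path through $G^c$: traverse the clique $V_a$, cross the complete join into $V_{a+1}$, traverse it, and so on up to $V_b$. This covers all vertices of the run by one path, so the $r$ runs yield a path covering of $G^c$ with $r$ paths, giving $p\le r$. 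Finally the $r-1$ gaps separating consecutive runs consist of unused labels lying strictly between the smallest and largest used labels, hence of holes, so $h\ge r-1\ge p-1$. Applying this to a hole-minimizing $\lambda$-labeling gives $\rho(G)\ge p-1$.

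The step I expect to require the most care is the snaking construction inside a run when labels are repeated: one must verify that the clique-within-a-class and complete-join-between-consecutive-classes structure really does admit a single Hamiltonian path in the induced subgraph of $G^c$, and that the count of holes versus runs is not spoiled by unused labels below the minimum used label (these only increase the hole count, so they are harmless). Once the two structural facts are in hand, the remainder is bookkeeping.
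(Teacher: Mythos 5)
The paper does not prove Theorem~\ref{thm}: it is quoted from \cite{gm2005} and used as a black box, so there is no in-paper proof to compare against. Your argument is correct and self-contained. The case split on $p=P(G^c)$ via Theorem~\ref{thm1} correctly pins down $\lambda(G)$ (the hypothesis $\lambda(G)\ge n-1$ is exactly what forces $\lambda(G)=n-1$ when $p=1$ and $\lambda(G)=n+p-2$ when $p\ge 2$); the concatenation construction for $\rho(G)\le p-1$ is precisely the labeling $f_{\mathcal P}$ that the paper invokes without proof at the start of Section~4; and the converse direction is sound: your two structural facts (each class $f^{-1}(j)$ is a clique of $G^c$, and consecutive classes are completely joined in $G^c$) do yield a Hamiltonian path of the subgraph of $G^c$ induced by each island, whence $P(G^c)\le r$ and $h\ge r-1\ge p-1$ for any $\lambda$-labeling with $h$ holes and $r$ islands. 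The two points you flag as needing care are indeed the only delicate ones, and both are handled: the top label $n+p-2$ is attained by your construction, so it is genuinely a $\lambda$-labeling; and each of the $r-1$ gaps lies strictly between two used labels, so its members satisfy $1\le h\le \lambda-1$ and count as holes, while unused labels outside the used range can only help. This recovers the standard proof of Georges and Mauro; nothing is missing.
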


It is not difficult to know that any two holes are non-consecutive
in a $\lambda$-labeling. An {\it island} of a given
$\lambda$-labeling of $G$ with $\rho (G)$ holes is a maximal set of
consecutive integers used by the labeling. The {\it island sequence}
is the ordered sequence of island cardinalities in nondecreasing
order. Figure \ref{fig1} \cite{attw2010} presents two different $\lambda$-labelings of the
complete bipartite graph $K_{2,3}$ with $\lambda=5$ and $\rho=1$,
and inducing the same island sequence $(2,3)$. Figure \ref{fig2} presents
two different $\lambda$-labelings of the non-connected graph
$K_5\cup K_2$ with $\lambda =8$ and $\rho=2$, and inducing two
different island sequences $(1,1,5)$ and $(1,3,3)$, respectively. In
\cite{gm2005}, Georges and Mauro raised the following  question
to inquire about the existence of a connected graph  possessing two
$\lambda$-labelings with different island sequences.

\begin{question} \cite{gm2005}\label{que1}
Is there a  connected graph admitting at least two
distinct island sequences?
\end{question}

A vertex in a graph is {\it heavy} if it has degree greater than
$2$, otherwise we say this vertex is {\it light}.
A {\it heavy edge} of $G$ is an edge incident to two heavy vertices
of $G$. A {\it leaf} in a
graph is a vertex of degree $1$. A {\it vine} of a graph $G$ is
defined as a maximal path in $G$ such that one endpoint is a leaf
and each vertex in $S$ is a light vertex in $G$.  If $G$ is not a
path, then there is a unique heavy vertex  adjacent to one of the
ends of $S$. We call such vertex the {\it center} of vine $S$.
A {\it
generalized star} is a tree which has exactly one heavy vertex and  all its vines have the same number
of vertices. A graph $G$ is {\it $2$-sparse} if $G$ contains no pair
of adjacent vertices of degree greater than $2$. The above notation is first introduced by Adams et al.
\cite{attw2010}  and they solved the above question by studying complements of
$2$-sparse trees.

 \begin{theorem}(cf. Theorem 2.8 in \cite{attw2010})\label{thm2}
Let $T$ be a $2$-sparse tree. If $T$ is neither a path nor a
generalized star, then its complement $T^c$ is connected and admits
at least two different island sequences.
\end{theorem}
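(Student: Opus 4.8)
The plan is to convert the statement about island sequences of $T^c$ into a purely combinatorial statement about minimum path coverings of $T$, and then to exploit the rigid structure of 2-sparse trees. For connectivity, I would note that $T^c$ is disconnected precisely when $V(T)$ splits into nonempty sets $A,B$ with every edge between $A$ and $B$ present in $T$, i.e. $T$ contains a spanning $K_{A,B}$. Counting edges, $|A|\,|B|\le |E(T)|=n-1$, while $|A|\,|B|\ge n-1$ always holds when $|A|+|B|=n$; hence $\{|A|,|B|\}=\{1,n-1\}$ and $T=K_{1,n-1}$ is a star. But a star is a generalized star, contradicting the hypothesis, so $T^c$ is connected.

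Next I would set up the correspondence. Since $T$ is not a path, $P(T)\ge 2$, so Theorem \ref{thm1} gives $\lambda(T^c)=n+P(T)-2\ge n-1$ and Theorem \ref{thm} gives $\rho(T^c)=P(T)-1$. In a $\lambda$-labeling of $T^c$ attaining $\rho(T^c)$ holes, exactly $n$ of the $n+P(T)-1$ available labels are used, so the labeling is injective; and two vertices with consecutive labels cannot be adjacent in $T^c$, hence are adjacent in $T$. Thus each island is a path of $T$, and the $\rho(T^c)+1=P(T)$ islands form a minimum path covering of $T$. Conversely, from any minimum path covering I would list vertices path by path and insert one unused label between consecutive paths; the only pairs receiving labels differing by at most $1$ are then consecutive vertices of a common path, which are $T$-adjacent, so no distance-one condition of $T^c$ is violated, and one obtains a valid $\lambda$-labeling with $P(T)-1$ holes whose islands are those paths. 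Consequently the island sequences of $T^c$ are exactly the sorted multisets of path-orders of minimum path coverings of $T$, and it suffices to exhibit two minimum path coverings of $T$ with different such multisets.

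For the structural reduction I would use that, since $T$ is 2-sparse, no two heavy vertices are adjacent, so every edge has a light endpoint and the removal requirements at distinct heavy vertices are independent. Hence a maximum spanning linear forest is obtained by keeping exactly two incident edges at each heavy vertex, giving $P(T)=1+\sum_{v\text{ heavy}}(\deg_T(v)-2)$, and the minimum path coverings are precisely those arising from independently choosing, at each heavy vertex, which two incident edges to keep. Fixing the choices at all heavy vertices except one vertex $v$ of degree $d\ge 3$, the only part of the multiset that varies is governed by $d$ ``stub'' orders $s_1,\dots,s_d$ (the orders of the path pieces meeting $v$ along its $d$ branches): merging the two smallest versus the two largest stubs produces multisets whose top entries are $s_1+s_2+1$ and $s_{d-1}+s_d+1$, and these differ whenever the $s_i$ are not all equal.

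The final case analysis is where I expect the real work to lie. If $T$ has no heavy vertex it is a path; with exactly one heavy vertex it is a spider whose vines are not all equal (else $T$ is a generalized star), so its stubs already differ. The delicate case is $\ge 2$ heavy vertices: I would pick an ``extremal'' heavy vertex $v$ (a leaf in the tree induced on the heavy vertices), so that $d-1\ge 2$ of its branches are vines and a single branch $B_0$ carries all remaining heavy vertices. If $v$'s vine orders already differ we are done; otherwise the vines share a common order $a$, and I would toggle the choice at the heavy vertex $w$ of $B_0$ nearest $v$. Keeping versus cutting $w$'s edge toward $v$ changes the stub $s_0$ from $b$ (the number of light vertices strictly between $v$ and $w$, which is at least $1$ by 2-sparseness) to some value $\ge b+1$, so one choice forces $s_0\ne a$. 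With that choice fixed, the stubs at $v$ are not all equal, and merging the two smallest versus the two largest stubs at $v$ yields two minimum path coverings with different multisets, hence two distinct island sequences of $T^c$. The points I would be most careful to verify are that the per-heavy-vertex choices are genuinely independent (this is exactly what 2-sparseness buys) and that altering the choice at $v$ leaves the rest of the covering unchanged, so that the comparison truly isolates the stub-merging at $v$.
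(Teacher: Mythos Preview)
Your argument is correct. The connectivity computation, the island--sequence $\leftrightarrow$ path--sequence dictionary, and the characterisation of minimum path coverings of a $2$-sparse tree as ``keep exactly two edges at each heavy vertex, independently'' are all sound; the one cosmetic slip is that the maximum entry after merging the two smallest stubs is $\max(s_1+s_2+1,\,s_d)$ rather than $s_1+s_2+1$, but in either case it is strictly smaller than $s_{d-1}+s_d+1$ once the $s_i$ are not all equal, so your conclusion stands.

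The route, however, is genuinely different from the paper's. The paper does not prove Theorem~\ref{thm2} directly at all: it first develops the constructive family $\mathcal F$ and proves the full characterisation Theorem~\ref{thm11}, and then remarks that a $2$-sparse tree which is neither a path nor a generalized star cannot be an underlying tree of $\mathcal F$ (each of the three operations produces an adjacent pair of heavy vertices), so Theorem~\ref{thm2} drops out as a corollary. What you do instead is a hands-on local argument exploiting $2$-sparseness: independence of the per-heavy-vertex choices lets you freeze everything except the behaviour near one extremal heavy vertex $v$, adjust the neighbouring heavy vertex $w$ to force the $B_0$-stub to differ from the common vine length, and then compare two mergings at $v$. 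Your approach is more elementary and entirely self-contained---it needs none of the $\mathcal F$ machinery or the labelled-tree induction of Theorems~\ref{thm9}--\ref{thm10}---and is much closer in spirit to the original argument in \cite{attw2010}. The paper's approach, by contrast, buys a complete classification of trees with a unique island sequence, from which the $2$-sparse case is only one easy specialisation.
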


They also determined the path covering number of $2$-sparse trees.

\begin{theorem}(cf. Theorem 2.4 in \cite{attw2010})\label{thm3}
Let $T$ be a $2$-sparse tree with $\ell\ge 2$ leaves. Then
$P(T)=\ell-1$.
\end{theorem}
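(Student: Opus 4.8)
The plan is to reformulate the path covering number of a tree in terms of edge deletion, and then establish matching lower and upper bounds using the degree sequence together with the $2$-sparse hypothesis. Since $T$ is a tree on $n$ vertices, a path covering by $k$ paths is precisely a spanning subforest of $T$ in which every component is a path; such a subforest has $n-k$ edges and is obtained by deleting $k-1$ edges from $T$. Hence $P(T)=1+m$, where $m$ is the minimum number of edges whose deletion leaves a forest of maximum degree at most $2$. Call a vertex of degree at least $3$ \emph{heavy}, and record the tree identity $\sum_{v\in V(T)}(d_T(v)-2)=2(n-1)-2n=-2$. Separating the leaves (each contributing $-1$) from the degree-$2$ vertices (contributing $0$) shows that the sum of $d_T(v)-2$ taken over all heavy vertices $v$ equals exactly $\ell-2$.

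For the lower bound $P(T)\ge \ell-1$, consider any edge set $S$ whose removal makes every component of $T-S$ a path. Then every vertex has degree at most $2$ in $T-S$, so each heavy vertex $v$ must be incident to at least $d_T(v)-2$ edges of $S$. Summing over heavy vertices, the quantity $\ell-2$ is at most $\sum_{e\in S}h(e)$, where $h(e)$ denotes the number of heavy endpoints of $e$. This is exactly where $2$-sparseness enters: no edge of $T$ joins two heavy vertices, so $h(e)\le 1$ for every edge and the right-hand side is at most $|S|$. Therefore $|S|\ge \ell-2$, and since this holds for every valid deletion set, $P(T)=1+\min|S|\ge \ell-1$.

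For the upper bound I would construct an explicit deletion set of size $\ell-2$. At each heavy vertex $v$, choose any $d_T(v)-2$ of its incident edges, and let $S$ be the union of these choices over all heavy vertices. The key point, again supplied by $2$-sparseness, is that the edge sets incident to distinct heavy vertices are pairwise disjoint, so the local choices never overlap and $|S|=\ell-2$. After deleting $S$, every heavy vertex has degree exactly $2$ while every light vertex has degree at most $2$, so $T-S$ is a disjoint union of paths; being obtained from a tree by deleting $\ell-2$ edges, it has $\ell-1$ components. This is a path covering with $\ell-1$ paths, giving $P(T)\le \ell-1$, and the two bounds combine to the claimed equality.

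The main obstacle, and the only place the hypothesis is genuinely used, is the bookkeeping that ties deleted edges to heavy vertices in both directions. In a general tree an edge joining two heavy vertices could discharge the degree requirement at both endpoints at once, simultaneously weakening the counting inequality in the lower bound and creating conflicts in the upper bound's construction. The $2$-sparse condition forbids exactly these heavy edges, which is what forces the clean formula $P(T)=\ell-1$; the step I would write out most carefully is the verification that no edge is double-counted, equivalently that the per-vertex edge choices partition without overlap.
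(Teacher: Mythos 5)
Your proof is correct, and it takes a genuinely different route from the paper. The paper obtains this statement as the special case $h=0$ of its Theorem \ref{thm5} (the two-sided bound $\ell-h-1\le P(T)\le \ell-1$), which it proves by induction on $|V(T)|$ using the standard leaf-pruning reductions (PT1) and (PT2). You instead recast $P(T)$ as $1+\min|S|$ over edge sets $S$ whose deletion leaves maximum degree at most $2$, and then run a direct double-counting argument: the identity $\sum_{v\ \mathrm{heavy}}(d_T(v)-2)=\ell-2$ gives the lower bound because $2$-sparseness forces each deleted edge to have at most one heavy endpoint, and the same disjointness makes the greedy per-vertex deletion of $d_T(v)-2$ edges achieve $|S|=\ell-2$ exactly. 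All the steps check out, including the component count $|S|+1=\ell-1$ and the (harmless) possibility of trivial one-vertex paths. What each approach buys: your argument is non-inductive, self-contained, and isolates precisely where $2$-sparseness is used; moreover, your lower-bound count generalizes immediately (a heavy edge can discharge at most two units, so $|S|\ge \ell-2-h$), recovering the lower half of the paper's Theorem \ref{thm5} for free. The paper's inductive framework, on the other hand, is reused almost verbatim to prove the finer results of Theorems \ref{thm7} and \ref{thm8}, which is why the authors set it up that way.
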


Furthermore, they  determined the path covering number of more
general connected non-cycle $2$-sparse graphs.

\begin{theorem}(cf. Theorem 3.2 in \cite{attw2010})\label{thm4}
 Let $G$ be a connected $2$-sparse graph with $m\ge 1$
edges, $n$ vertices, and $\ell$ leaves. If $G$ is not a cycle, then
$P(G)=\ell+m-n$.
\end{theorem}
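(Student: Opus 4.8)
The plan is to prove the two inequalities $P(G)\ge \ell+m-n$ and $P(G)\le \ell+m-n$ separately. Throughout I will use the standard translation that a path covering by $p$ vertex-disjoint paths is the same thing as a spanning linear forest $F$ (a disjoint union of paths) with exactly $n-p$ edges, so that $P(G)=n-\max_F|E(F)|$. Writing $U=m-|E(F)|$ for the number of edges of $G$ \emph{not} used by $F$, one has $p=n-|E(F)|=n-m+U$, so the target value $p=\ell+m-n$ is equivalent to $U=\ell+2(m-n)$. The numerical identity I will lean on is that, for a connected graph with at least one edge, $\sum_{v}\max(\deg_G(v)-2,0)=\ell+2(m-n)$; this follows from $\sum_v(\deg_G(v)-2)=2m-2n$ together with the observation that only the $\ell$ leaves contribute a negative term $-1$.

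For the lower bound I will show that \emph{every} spanning linear forest $F$ satisfies $U\ge\sum_{v}\max(\deg_G(v)-2,0)$. For each vertex $v$ set $r(v)=\deg_G(v)-d_F(v)\ge0$, the number of edges at $v$ left unused. Since $d_F(v)\le2$, at a heavy vertex $v$ we have $r(v)\ge\deg_G(v)-2=\max(\deg_G(v)-2,0)$, while the latter quantity is $0$ at every light vertex; summing over the set $H$ of heavy vertices gives $\sum_{v\in H}r(v)\ge\sum_{v\in H}(\deg_G(v)-2)=\sum_{v}\max(\deg_G(v)-2,0)$. The place where $2$-sparseness enters, and what I expect to be the crux of the whole argument, is that $\sum_{v\in H}r(v)$ counts incidences between unused edges and heavy endpoints; because no edge of a $2$-sparse graph joins two heavy vertices, each unused edge has at most one heavy endpoint and is counted at most once, so $\sum_{v\in H}r(v)\le U$. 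Chaining these gives $U\ge\ell+2(m-n)$ and hence $p\ge\ell+m-n$. It is worth noting that a naive double count over all vertices yields only $U\ge\frac12\sum_v\max(\deg_G(v)-2,0)$, off by a factor of two; it is precisely the $2$-sparse hypothesis that eliminates this loss.

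For the upper bound I will induct on the cyclomatic number $c=m-n+1$. When $c=0$ the graph $G$ is a $2$-sparse tree with $\ell\ge2$ leaves, and Theorem \ref{thm3} gives $P(G)=\ell-1=\ell+m-n$. For the inductive step ($c\ge1$) I first observe that, since $G$ is connected, $2$-sparse, and not a cycle, every cycle of $G$ must contain a heavy vertex: a cycle all of whose vertices had degree $2$ would meet no further edges and would therefore be an entire component, forcing $G$ to be that cycle. Fixing such a cycle, a heavy vertex $x$ on it, and a cycle edge $e=xy$, $2$-sparseness makes the other endpoint $y$ light, and lying on a cycle it has degree exactly $2$. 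Deleting $e$ keeps the graph connected and $2$-sparse, lowers $c$ by one, turns $y$ into a new leaf while altering no other vertex's degree-class, so $G-e$ has parameters $\ell+1,\,m-1,\,n$ and is again not a cycle. By induction $P(G-e)=(\ell+1)+(m-1)-n=\ell+m-n$, and since every path covering of $G-e$ uses only edges of $G$ and hence is also a path covering of $G$, we obtain $P(G)\le P(G-e)=\ell+m-n$. Combining the two bounds proves the theorem; the anticipated obstacles are the factor-two subtlety in the lower bound (handled by $2$-sparseness) and guaranteeing, in the inductive step, a deletable cycle edge with a heavy endpoint that shifts the parameters by exactly $(\Delta\ell,\Delta m)=(+1,-1)$.
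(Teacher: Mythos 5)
Your argument is correct, but note that the paper never proves Theorem~\ref{thm4} at all: it is quoted verbatim from Theorem~3.2 of \cite{attw2010} and used as a black box (e.g.\ as the base case $h=0$ in the proof of Theorem~\ref{thm12}). So there is no in-paper proof to compare against, and what you have supplied is a self-contained replacement. Both halves check out. The lower bound via the identity $\sum_v\max(\deg_G(v)-2,0)=\ell+2(m-n)$ and the incidence count between unused edges and heavy endpoints is sound, and you correctly isolate where $2$-sparseness is indispensable: without it each unused edge could be charged to two heavy endpoints and the bound degrades by a factor of two. The upper bound by induction on the cyclomatic number is also fine; the key observations (every cycle of a connected non-cycle graph contains a heavy vertex, and deleting a cycle edge $xy$ with $x$ heavy creates exactly one new leaf $y$ while keeping the graph connected, $2$-sparse and non-cyclic) all hold, and the base case legitimately invokes Theorem~\ref{thm3}, which the paper independently derives from Theorem~\ref{thm5}. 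One cosmetic imprecision: after deleting $e=xy$ the vertex $x$ may drop from heavy to light (if $\deg_G(x)=3$), so ``altering no other vertex's degree-class'' is not literally true; but since $x$ cannot become a leaf, the only parameter you track, $\ell$, still changes by exactly $+1$, so nothing breaks. Your edge-deletion induction is in the same spirit as the paper's own later arguments (Theorems~\ref{thm8} and~\ref{thm12}), which delete heavy edges via Lemmas~\ref{lem3} and~\ref{lem4}; your lower-bound counting argument, by contrast, has no analogue anywhere in the paper and is arguably the more informative half, since it gives the bound $P(G)\ge n-m+\sum_v\max(\deg_G(v)-2,0)$ for every spanning linear forest without any minimality assumption.
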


Theorem \ref{thm3} and Theorem \ref{thm4} are important because it
adds to the limited library of known path covering numbers. Also,
when combined with the prior results  in Theorem \ref{thm1} and
Theorem \ref{thm}, it implies that the $L(2,1)$-labeling number and
hole index for complements of non-path $2$-sparse trees and for
complements of certain non-cycle $2$-sparse graphs are  determined.
As pointed in \cite{attw2010}, further study in two
directions is encouraged: \\
 $\bullet$ The existence of other families that admit
multiple island sequence;\\
$\bullet$  These results in Theorem \ref{thm3} and Theorem
\ref{thm4} are extended to include more general trees and graphs.

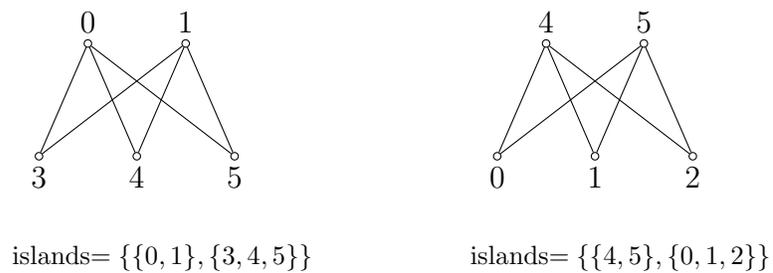
\begin{figure}[!htbp]
\def\thesubfigure{}
\hfill
\subfigure[\mbox{islands$=\{\{0,1\},\{3,4,5\}\}$}]{
\begin{tikzpicture}[
  every node/.style={inner sep=1pt,circle,draw}]
\path \foreach \x/\y in {0/1.95,1/3.25} {
  node[label=above:\x](\x) at(\y,0) {}};
\path[yshift=-1.5cm]
      \foreach \x/\y in {3/1.3,4/2.6,5/3.9} {
  node[label=below:\x](\x) at(\y,0) {}};
\foreach \x in {0,1}
  \foreach \y in {3,4,5}
    \draw (\x) -- (\y);
\end{tikzpicture}}\hfill
\subfigure[\mbox{islands$=\{\{4,5\},\{0,1,2\}\}$}]{
\begin{tikzpicture}[
  every node/.style={inner sep=1pt,circle,draw}]
\path \foreach \x/\y in {4/1.95,5/3.25} {
  node[label=above:\x](\x) at(\y,0) {}};
\path[yshift=-1.5cm]
      \foreach \x/\y in {0/1.3,1/2.6,2/3.9} {
  node[label=below:\x](\x) at(\y,0) {}};
\foreach \x in {4,5}
  \foreach \y in {0,1,2}
    \draw (\x) -- (\y);
\end{tikzpicture}}
\hfill\null
\caption{Two $5$-labelings of $K_{2,3}$ with $1$
hole and island sequence $(2,3)$.}\label{fig1}
\end{figure}

\begin{figure}[!htbp]
\def\thesubfigure{}
\hfill
\subfigure[\mbox{islands$=\{\{0,1,2,3,4\},\{6\},\{8\}\}$}]{
\begin{tikzpicture}[
  every node/.style={inner sep=1pt,circle,draw}]
\path \foreach \x/\y in {0/90,2/18,4/306,6/234,8/162} {
  node[label=\y:\x](\x) at(\y:1.3) {}};
\path[yshift=-1cm] \foreach \x/\y in {3/306,1/234} {
  node[label=\y:\x](\x) at(\y:1.3) {}};
\draw (0) -- (2) -- (4) -- (6) -- (8) -- (0)
      (0) -- (4) -- (8) -- (2) -- (6) -- (0)
      (1) -- (3);
\end{tikzpicture}}\hfill
\subfigure[\mbox{islands$=\{\{0,1,2\},\{4,5,6\},\{8\}\}$}]{
\begin{tikzpicture}[
  every node/.style={inner sep=1pt,circle,draw}]
\path \foreach \x/\y in {0/90,2/18,4/306,6/234,8/162} {
  node[label=\y:\x](\x) at(\y:1.3) {}};
\path[yshift=-1cm] \foreach \x/\y in {5/306,1/234} {
  node[label=\y:\x](\x) at(\y:1.3) {}};
\draw (0) -- (2) -- (4) -- (6) -- (8) -- (0)
      (0) -- (4) -- (8) -- (2) -- (6) -- (0)
      (1) -- (5);
\end{tikzpicture}}
\hfill\null
\caption{Two  $8$-labelings of $K_5\cup K_2$ with  different island sequences.}\label{fig2}
\end{figure}
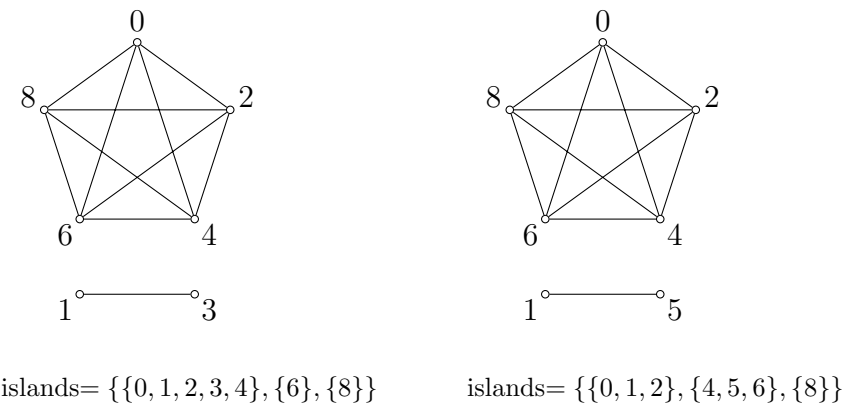

The main purpose of this paper is to extend the work in
\cite{attw2010} through the investigation of the path covering
number  of trees and tree-like graphs. In Section $2$, we give some lemmas which will used in Sections $3$, $4$ and $5$.
In Section $3$, we study the
path covering numbers of trees and get some general results. In
Section $4$, we establish a characterization for trees, whose
complements admit unique island sequences. Furthermore, a
linear-time algorithm  is given to determine  whether the complement of a given tree $T$ admits unique island sequence.
 In Section $5$, we extend some
results on trees to some families of graphs which contain $2$-sparse
graphs as subclass.

\section{Preliminaries}

If $e$ is an edge of a graph $G$, then $G-e$ is the graph obtained
by deleting $e$ from $G$. If $H$ is a subgraph of $G$, the graph
$G-H$ is the graph obtained by deleting the vertices of $H$ from $G$
and any edge incident to a vertex in $H$. If $f$ is an  edge not in
$G$ but its ends are in $G$ then $G+f$ is the graph obtained by
adding $f$ to $G$. If two graphs $G$ and $H$ are disjoint, the graph
$G+H$ is defined as the graph with vertex and edge sets given
respectively by the union of the vertex and edge sets of $G$ and
$H$.

The following  result is obvious.

\begin{lemma} \label{lem1} 
Let $S$ be a vine of a graph $G$. Then $S$ is a subgraph of every
minimum path covering of $G$.
\end{lemma}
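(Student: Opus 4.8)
The plan is to fix an arbitrary minimum path covering $\mathcal{P}$ of $G$ and to show that every edge of the vine $S = v_1 v_2 \cdots v_k$ (with $v_1$ the leaf endpoint) is used by $\mathcal{P}$; equivalently, that $v_1 \cdots v_k$ occurs as a contiguous subpath of one of the paths of $\mathcal{P}$. First I would record the degree pattern along the vine: since $v_1$ is a leaf it has degree $1$, and since each $v_j$ with $2 \le j \le k-1$ is a light vertex lying interior to the path $S$, it must have degree exactly $2$, with its only neighbours being $v_{j-1}$ and $v_{j+1}$; the final vertex $v_k$ is either a leaf (when $S$ is an entire path component) or a light vertex of degree $2$ adjacent to the center. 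The point of this bookkeeping is that every vine vertex has all of its incident edges lying on $S$, so it can never serve as an interior branch vertex of a covering path except along $S$.

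The core of the argument is an induction on $j$ showing that, for each $1 \le j \le k$, the vertices $v_1, \ldots, v_j$ form a contiguous subpath of some path of $\mathcal{P}$. The base case $j = 1$ is immediate. For the inductive step, I assume $v_1 \cdots v_j$ is a subpath of a path $P \in \mathcal{P}$, with $j \le k-1$, and suppose toward a contradiction that the edge $v_j v_{j+1}$ is not used by $\mathcal{P}$. Then $v_j$ is necessarily an endpoint of $P$: for $j = 1$ this holds because $v_1$'s only incident edge $v_1 v_2$ is unused (so $v_1$ is a singleton path), while for $j \ge 2$ it holds because $v_j$ has degree $2$, its edge $v_{j-1} v_j$ is already used, and its only other edge $v_j v_{j+1}$ is unused. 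Symmetrically, $v_{j+1}$ has degree at most $2$ with one incident edge being the unused $v_j v_{j+1}$, so $v_{j+1}$ uses at most one edge in $\mathcal{P}$ and is therefore an endpoint (or a singleton) of the path $Q \in \mathcal{P}$ containing it. Since $v_{j+1} \notin P$, the paths $P$ and $Q$ are distinct.

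Now I would perform the standard exchange argument: because $v_j$ and $v_{j+1}$ are endpoints of the distinct paths $P$ and $Q$ and $v_j v_{j+1}$ is an edge of $G$, replacing $P$ and $Q$ by the single path obtained by concatenating them through the edge $v_j v_{j+1}$ yields a path covering with one fewer path. This contradicts the minimality of $\mathcal{P}$, so $v_j v_{j+1}$ must in fact be used, completing the induction. Taking $j = k$ then shows that $S$ is a subpath of some member of $\mathcal{P}$, i.e. that $S$ is a subgraph of $\mathcal{P}$, as claimed.

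The step I expect to require the most care is verifying that $v_j$ and $v_{j+1}$ always sit at the boundaries of their respective paths, so that the concatenation through $v_j v_{j+1}$ is legitimate and strictly reduces the number of paths. This is exactly where the defining property of a vine is used: because all its vertices are light (degree $\le 2$) and one end is a leaf, each vine vertex has no incident edges other than those of $S$, leaving it no way to be an interior branch vertex of a covering path unless the corresponding vine edge is used. The degenerate cases cause no real trouble and should merely be noted: when $k = 1$ the vine has no edges and the statement is vacuous, and the case in which $v_k$ is itself a leaf is covered by the same inductive step with $v_{k+1}$ absent.
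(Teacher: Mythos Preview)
Your proof is correct. The paper itself offers no proof at all for this lemma, simply declaring the result ``obvious'' and moving on; your careful induction along the vine, using the degree constraints of light vertices to force each vine vertex to sit at a path endpoint whenever the next vine edge is unused, and then concatenating to contradict minimality, is exactly the standard argument one would supply if pressed. In short, you have written out in full what the authors left implicit.
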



\begin{lemma}\label{lem2}
If $v$ is the common center of  vines $S_1$ and $S_2$ in a graph
$G$, then $v$ is an internal vertex in every minimum path covering
of $G$.
\end{lemma}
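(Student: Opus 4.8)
The plan is to argue by contradiction using Lemma \ref{lem1}, which already guarantees that each vine sits inside a single path of any minimum path covering. First I would fix notation: write $a_1$ and $a_2$ for the non-leaf ends of $S_1$ and $S_2$ that are adjacent to the common center $v$. Two preliminary observations drive everything. Since $v$ is heavy while every vine vertex is light, $v$ lies in neither $S_1$ nor $S_2$; and since each $a_i$ is light, its neighbours are exhausted by (at most) one vertex interior to $S_i$ together with $v$, so the only neighbour of $a_i$ lying outside $S_i$ is $v$ itself. These two facts pin down exactly how a path of the covering may leave the end of a vine, and they are what let me control the local structure at $v$.

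Next I would take an arbitrary minimum path covering $\mathcal{P}$ and suppose, for contradiction, that $v$ is not internal, i.e. $v$ is an endpoint (or a singleton) of the path $R \in \mathcal{P}$ containing it. Then $v$ is incident with at most one edge used by $\mathcal{P}$, so at least one of the edges $va_1, va_2$ is unused; without loss of generality say $va_2$ is unused. By Lemma \ref{lem1} the vine $S_2$ is a subpath of the path $Q \in \mathcal{P}$ containing it, and since the only neighbour of $a_2$ outside $S_2$ is $v$ while the edge $va_2$ is not used, the vertex $a_2$ must be an endpoint of $Q$.

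Finally I would perform the exchange that contradicts minimality. Because $Q$ consists of light vertices it cannot contain the heavy vertex $v$, so $R \neq Q$; thus $v$ is an endpoint of $R$, the vertex $a_2$ is an endpoint of the distinct path $Q$, and $va_2$ is an edge of $G$. Concatenating $R$, the edge $va_2$, and $Q$ yields a single path on the disjoint union $V(R) \cup V(Q)$, so replacing $R$ and $Q$ by this path produces a path covering with one fewer path than $\mathcal{P}$, which is impossible. Hence $v$ is internal in every minimum path covering.

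I expect the only genuinely delicate point to be the book-keeping at the ends of the vines. One must verify the claim that $a_i$'s sole outside neighbour is $v$ even in the degenerate case where a vine is a single leaf (so $a_i=\ell_i$ has degree one and supplies no interior edge), and one must check $R\neq Q$ before splicing, since otherwise the concatenation would fail to be a simple path. Both issues are resolved by the same mechanism, namely the lightness of vine vertices against the heaviness of $v$, after which the splicing argument is routine.
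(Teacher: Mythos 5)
Your proof is correct and follows essentially the same route as the paper's: assume $v$ is an endpoint, locate a vine $S_2$ whose connecting edge $va_2$ is unused, use Lemma \ref{lem1} to see that $S_2$ stands alone as a path $Q\neq R$ of the covering, and splice $R$ and $Q$ through $va_2$ to contradict minimality. Your extra book-keeping (why $a_2$'s only outside neighbour is $v$, why $Q\neq R$, the single-vertex vine case) just makes explicit what the paper leaves implicit.
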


\begin{proof}
Assume for contradiction that $v$ is the end of path $P$ in a
minimum path covering of $G$. Since $v$ is the center of both $S_1$
and $S_2$, there exists a vine, say $S_2$,  which is not contained
in $P$. By Lemma \ref{lem1}, $S_2$ must be a path in this path
covering, and we call it $Q$. Let $f$ be the edge incident to $v$
and to one of the end of $Q$. Since $P$ and $Q$ are different,
$(P+Q)+f$ is a path. Replacing paths $P$ and $Q$ with $(P+Q)+f$, we
obtain a path covering of $G$ with smaller number of paths, a
contradiction.
\end{proof}

We now illustrate the {\it swapping construction} that is first
introduced in \cite{attw2010}. Let $P$ and $Q$ be two different
paths in a given path covering of a graph $G$ such that $P$ contains
an edge $e$ incident to an internal vertex $v$ of $P$, and one end
of $Q$ is adjacent to $v$ through an edge $f$. Clearly, $P-e$ has
two connected components, namely the paths $P_1$ and $P_2$ where $v$
is an end of $P_1$. Since $P$ and $Q$ are different paths,
$(P_1+Q)+f$ is a path. We can replace paths $P$ and $Q$ with paths
$(P_1+Q)+f$ and $P_2$ to obtain another path covering of $G$ with
the same number of paths. For convenience, we will say that this new
path covering was obtained from the original one by {\it swapping
$e$ with $f$}. 


\begin{lemma}\label{lem3}
Let $v$ be a heavy vertex of a tree $T$ and let $e$ be an edge
incident to $v$. Then $e$ is not used in some minimum path covering
of $T$ and hence $P(T)=P(T-e)$ if one of the following conditions
holds:\\
(a)\ \ $v$ is the common center of at least three vines;\\
(b)\ \ $v$ is the common center of exactly two vines and $e$ is not
incident to any end of vines.
\end{lemma}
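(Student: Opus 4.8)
The plan is to reduce the claim to the construction of a single minimum path covering of $T$ that avoids $e$. Indeed, any path covering of $T-e$ uses only edges of $T-e$, which are edges of $T$, so it is also a path covering of $T$ and hence $P(T)\le P(T-e)$; conversely, a minimum path covering of $T$ that does not use $e$ is itself a path covering of $T-e$, giving $P(T-e)\le P(T)$. Thus exhibiting one minimum path covering of $T$ in which $e$ is unused will prove $P(T)=P(T-e)$. I would therefore start from an arbitrary minimum path covering $\mathcal{P}$ of $T$; if $e$ is already unused there is nothing to do, so I assume $e$ is used and aim to modify $\mathcal{P}$ into a minimum path covering without $e$ while keeping the number of paths fixed.

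The first key step is to pin down the role of $v$. In both cases $v$ is the common center of at least two vines, so Lemma \ref{lem2} guarantees that $v$ is an internal vertex of the path $P\in\mathcal{P}$ containing it; in particular exactly two edges of $\mathcal{P}$ are incident to $v$, one of which is $e$. Next I would locate a vine $S$ with center $v$ whose joining edge $f$ (the edge from $v$ to the endpoint of $S$ nearest $v$) is not used in $\mathcal{P}$ and satisfies $f\neq e$. In case (a) this follows by counting: $v$ has at least three such joining edges but only two edges of $\mathcal{P}$ meet $v$, so at least one joining edge is unused, and it differs from $e$ because $e$ is used. In case (b) the two joining edges $f_1,f_2$ both differ from $e$, since by hypothesis $e$ is not incident to an endpoint of a vine; as $e$ already occupies one of the two edge-slots at $v$, at most one of $f_1,f_2$ is used, so the other is the desired unused edge $f$.

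Finally I would apply the swapping construction. By Lemma \ref{lem1} the vine $S$ lies inside $\mathcal{P}$, and since its joining edge $f$ is unused, $S$ occurs as a standalone path $Q$ whose endpoint nearest $v$ is adjacent to $v$ through $f$. Because $v\notin S=Q$ while $v\in P$, the paths $P$ and $Q$ are distinct, and $e$ is incident to the internal vertex $v$ of $P$. Swapping $e$ with $f$ then produces a new minimum path covering, with the same number of paths, that uses $f$ but not $e$, completing the argument. I expect the main obstacle to be the bookkeeping in the second step: one must ensure that the replacement edge is simultaneously a vine joining edge, genuinely unused, and distinct from $e$, and it is precisely here that the two hypotheses are needed---(a) through the pigeonhole count on the at least three joining edges at $v$, and (b) through the assumption that $e$ avoids the endpoints of the two vines so that it cannot itself be a joining edge. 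The reliance on Lemma \ref{lem2} to force $v$ internal is also essential, since the swapping construction requires $e$ to be incident to an internal vertex; were $v$ an endpoint, deleting $e$ could not be compensated without increasing the number of paths.
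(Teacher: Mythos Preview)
Your proof is correct and follows essentially the same approach as the paper's: use Lemma~\ref{lem2} to force $v$ to be internal in the path $P$ containing it, locate a vine $S$ at $v$ not lying on $P$ (your explicit case split on (a) and (b) just spells out what the paper asserts in one sentence), invoke Lemma~\ref{lem1} to see $S$ is a standalone path $Q$, and then swap $e$ with the joining edge $f$ to produce a minimum path covering avoiding $e$. Your added remark that $P(T)\le P(T-e)$ is automatic and that a minimum covering of $T$ avoiding $e$ witnesses the reverse inequality is a helpful clarification the paper leaves implicit.
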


\begin{proof}
Suppose that one of the above condition holds. Consider an arbitrary
minimum path covering of $T$, by Lemma \ref{lem2}, $v$ is an
internal vertex in this minimum path covering. Assume that  a path $P$ in this
path covering contains $v$. Then $P$ contains  exactly two edges
incident to $v$. If $e\notin E(P)$, then $e$ is  contained in no
paths within the given minimum path covering  since $v$ is an
internal vertex in this minimum path covering. It is clear that
$P(T)=P(T-e)$. Hence, we assume that $e\in E(P)$. In this situation,
there exists a vine, say $S$, which is not contained in the path
$P$, and by Lemma \ref{lem1}, $S$ must be a path in this path
covering, and we call it $Q$. Let $f$ be the edge incident to $v$
and to one of the end of $Q$. By swapping $e$ with $f$, we obtain
another minimum path covering of $T$ and $e$ is not contained in any
path within the new path covering. It implies that $P(T)=P(T-e)$.
\end{proof}

\begin{lemma}\label{lem4}
If $G$ is a  graph such that each heavy vertex  has at
least three light neighbors, then $P(G)=P(G-e)$, where $e$ is a heavy
edge of $G$.
\end{lemma}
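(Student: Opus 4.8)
The plan is to prove the two inequalities $P(G)\le P(G-e)$ and $P(G-e)\le P(G)$ separately. The first is immediate: every path covering of $G-e$ uses only edges of $G-e\subseteq E(G)$, so it is also a path covering of $G$ with the same number of paths, whence $P(G)\le P(G-e)$. All the content lies in the reverse inequality, which I would obtain by exhibiting a \emph{minimum} path covering of $G$ in which the heavy edge $e=uv$ is not used; such a covering is simultaneously a path covering of $G-e$, giving $P(G-e)\le P(G)$.

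To build such a covering I would first show that in every minimum path covering of $G$ the endpoint $v$ of $e$ is an internal vertex. Suppose instead that $v$ uses at most one incident edge. Since $v$ is heavy with at least three light neighbours and at most one of its incident edges is used, at least two light neighbours $w$ have $vw$ unused. A light neighbour $w$ satisfies $\deg_G(w)\le 2$, and one of its edges is $vw$; because $vw$ is unused, $w$ can use at most one edge, so $w$ is an endpoint or an isolated vertex. If some such $w$ lay on a path different from the one containing $v$, joining $v$ to $w$ via $vw$ would merge two paths and lower the count, contradicting minimality; hence every such $w$ would have to be the unique opposite endpoint of $v$'s own path, which is impossible for two distinct vertices. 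Therefore $v$ is internal. The same argument applies to $u$, but I expect to need only the statement for $v$.

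Next I would fix a minimum path covering and the path $P$ containing $v$, and assume $e=uv$ is one of the two path-edges at $v$ (otherwise $e$ is already unused and we are done); write $vy$ for the other path-edge. Since $u$ is heavy it is not a light neighbour of $v$, so among the at least three light neighbours of $v$ only $y$ can be joined to $v$ in the covering; this leaves a set $W$ of at least two light neighbours whose edges to $v$ are unused. For a suitable $w\in W$ I would then remove $e$ by one of two moves. If $w$ lies on a path $Q\ne P$ (necessarily as an endpoint or isolated vertex, exactly as above), I apply the swapping construction to swap $e$ out and $f=vw$ in. If $w$ is instead the endpoint of $P$ on the $u$-side of $v$, I delete $e$ and add $vw$, re-splicing the two pieces of $P-e$ into a single path. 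In either case the result is a minimum path covering of $G$ avoiding $e$.

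The step I expect to be the main obstacle is the residual case in which an available $w\in W$ sits on $P$ itself as the endpoint on the $y$-side of $v$: there the naive rerouting (adding $vw$ after deleting $e$) closes a cycle rather than forming a path. I would dispose of this by counting. An element of $W$ lying on $P$ must be an endpoint of $P$, since a light vertex $w$ with $vw$ unused cannot be internal; and $P$ has exactly one endpoint on the $y$-side of $v$, so at most one element of $W$ is of this bad type. Because $|W|\ge 2$, some $w\in W$ is either off $P$ or the endpoint of $P$ on the $u$-side, and for that $w$ one of the two admissible moves applies. This yields the desired minimum covering avoiding $e$ and completes the proof.
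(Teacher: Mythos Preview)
Your argument is correct and follows the same overall strategy as the paper's proof: start from an arbitrary minimum path covering of $G$, and if $e=uv$ is used, reroute through a suitable light neighbour to produce a minimum covering of $G$ that avoids $e$.

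The execution differs. You work with the light neighbours of $v$ alone, first prove that $v$ must be internal, and then split into cases according to where the chosen light neighbour $w$ sits: on another path (swap), at the $u$-side endpoint of $P$ (re-splice inside $P$), or at the $y$-side endpoint (bad, but at most one such $w$). The paper instead pools the light neighbours of \emph{both} $u$ and $v$ (at least six in total) and counts that the path $P$ through $e$ can contain at most four of them: at most one as the second path-neighbour of $u$, at most one as the second path-neighbour of $v$, and at most two as endpoints of $P$. Hence some light neighbour lies entirely off $P$, is necessarily an endpoint of its own path, and a single swap finishes. This sidesteps both your preliminary ``$v$ is internal'' step and the intra-$P$ re-splice case. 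Your route is a little longer but perfectly sound; the paper's pooling trick is what buys the shorter argument.
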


\begin{proof}
Let $e=uv$ be a heavy edge of $G$. Clearly, $P(G-e)\ge P(G)$.
Consider an arbitrary minimum path covering of $G$, we will use this
minimum path covering to construct a path covering of $G-e$ with
exactly $P(G)$ paths, which would imply that $P(G-e)=P(G)$. Suppose
that the path $P$ contains the vertex $u$ in this minimum path
covering. If $e$ is not in $P$, then this path covering of $G$ is
obviously a path covering of $G-e$ with $P(G)$ paths. So, we will
consider the case when $e$ is in $P$. Let $u_1,\ldots,u_k$ ($k\ge
3$) be light vertices adjacent to $u$ and let $v_1,\ldots,v_t$
($t\ge 3$) be light vertices adjacent to $v$. $P$ contains at most
four vertices in $\{u_1,\ldots,u_k,v_1,\ldots,v_t\}$. 
Hence, there is
a vertex, say $u_3$, not containing in $P$. Since $u_3$ is a light
vertex, $u_3$ must be an end of one path, call it $Q$, in this
minimum path covering of $G$. Let $P_1$ be the connected component
of $P-e$ containing $u$ and let $P_2$ be the other component. Let
$f$ be the edge incident to $u$ and $u_3$. Then $P'$= $(P_1+Q)+f$ is
a path. Replacing $P$ and $Q$ with $P'$ and $P_2$ we obtain a path
covering of $G-e$ with $P(G)$ paths.
\end{proof}


\section{Path covering number of trees}                            

In this section, we establish some general results for the path
covering number of trees.

\begin{theorem}\label{thm5}
Let $T$ be a tree with $\ell$ leaves and $h$ heavy edges. If $T$ is
not a single vertex, then
$$ \ell-h-1\le P(T)\le \ell-1.$$
\end{theorem}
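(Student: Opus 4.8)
The plan is to reformulate path coverings of $T$ as spanning linear forests and then reduce both inequalities to a single degree count. Since $T$ is acyclic, any edge subset $F\subseteq E(T)$ in which every vertex has degree at most $2$ is automatically a disjoint union of paths, and conversely the edge set of any path covering is such a subset. A forest $F$ has $n-|F|$ components, so a path covering with $k$ paths corresponds to a spanning linear forest that omits exactly $k-1$ edges of $T$. Writing $r$ for the minimum number of edges that must be deleted from $T$ to reduce the maximum degree to $2$, I therefore expect the identity $P(T)=r+1$, which turns the theorem into the equivalent claim $\ell-h-2\le r\le \ell-2$.

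The next step is a handshake computation that drives everything. Let $H$ be the set of heavy vertices. Splitting the sum $\sum_v\deg(v)=2(n-1)$ according to leaves, degree-two vertices, and heavy vertices, and using $n=|\{\deg 1\}|+|\{\deg 2\}|+|H|$, I would derive that the total excess satisfies $\sum_{v\in H}(\deg(v)-2)=\ell-2$. This exact identity is the engine for both bounds.

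For the upper bound I would delete edges greedily: while some vertex has degree at least $3$, remove one edge incident to such a vertex. Deleting an edge never raises any degree, and it strictly lowers the potential $\Phi=\sum_v\max(\deg(v)-2,0)$ by at least $1$. Since $\Phi$ starts at $\ell-2$ and is a nonnegative integer, the process halts after at most $\ell-2$ deletions with $\Phi=0$, i.e. maximum degree $2$. Hence $r\le \ell-2$ and $P(T)\le \ell-1$.

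The lower bound is the step that introduces the heavy edges and that I expect to require the most care. Fix a minimum deletion set $R$ with $|R|=r$ and $T-R$ of maximum degree at most $2$. Each heavy vertex $v$ must lose at least $\deg(v)-2$ of its incident edges, so summing over $H$ gives $\sum_{v\in H}|\{e\in R:v\in e\}|\ge \sum_{v\in H}(\deg(v)-2)=\ell-2$. Rewriting the left side by edges, $\sum_{v\in H}|\{e\in R:v\in e\}|=\sum_{e\in R}(\text{number of heavy endpoints of }e)$, each deleted edge contributes at most $2$, and it contributes $2$ precisely when both its endpoints are heavy in $T$, that is, precisely when it is a heavy edge. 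The delicate point is exactly this identification: because deletions only decrease degrees, counting heavy endpoints against the original degrees of $T$ is legitimate, so the number of doubly-counted edges is at most $h$. This yields $\ell-2\le r+h$, hence $r\ge \ell-h-2$ and $P(T)\ge \ell-h-1$, completing the proof.
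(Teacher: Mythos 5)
Your proof is correct, and it takes a genuinely different route from the paper's. The paper argues by induction on the number of vertices, repeatedly pruning a vertex $v$ whose neighbors are all leaves except one, via the standard reductions $P(T)=P(T-z_1)$ (one leaf neighbor) and $P(T)=P(T')+k-1$ (several leaf neighbors), then tracking how $\ell$ and $h$ change in each case. You instead observe that for a tree a path covering with $k$ paths is the same thing as a spanning subgraph of maximum degree at most $2$ omitting $k-1$ edges, so $P(T)=r+1$ where $r$ is the minimum number of edge deletions reducing the maximum degree to $2$; both bounds then drop out of the single handshake identity $\sum_{v\in H}(\deg(v)-2)=\ell-2$, the upper bound by a greedy potential argument and the lower bound by double-counting incidences between deleted edges and heavy vertices, with heavy edges accounting exactly for the doubly-counted terms. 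Your argument is shorter, non-inductive, and makes transparent \emph{why} each heavy edge relaxes the bound by one (it can serve two over-full vertices at once); it also isolates the reformulation $P(T)=r+1$, which is a clean structural fact in its own right. The paper's inductive scheme is less illuminating here but is reused almost verbatim to prove the exact formula of Theorem~\ref{thm7}, where the answer involves a maximum matching in the subgraph induced by certain heavy vertices and a pure counting argument would be harder to push through. The only points worth stating explicitly if you write this up are the two small facts you use implicitly: every tree on at least two vertices has $\ell\ge 2$ (so the potential $\ell-2$ is nonnegative), and the degree constraint defining $R$ is measured against the original degrees of $T$, which is exactly what the inequality $|\{e\in R: v\in e\}|\ge \deg_T(v)-2$ requires.
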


\begin{proof}
The proof will proceed by induction on the number of vertices in $T$.
The result clearly holds  when $T$  is a star or $T$ has exactly two vertices. Suppose now that  $T$ is other than a star and has at
least three vertices.
We can choose a vertex $v\in V(T)$ with exactly one non-leaf neighbor $u$ and $k$ leaf neighbors $z_1,\cdots,z_k$.
It is  known and easy to see that (PT1) and (PT2) hold.

(PT1) If $k=1$, then $P(T)=P(T')$, where $T'=T-z_1$;

(PT2) If $k\ge 2$, then $P(T)=P(T')+k-1$, where $T'=T-\{v,z_1,\cdots,z_k\}$.

If  $k=1$, then $v$ becomes a leaf in $T'$ and hence $T'$ has $\ell'=\ell$ leaves. So we have $P(T)=P(T')$ (By (PT1)), $\ell=\ell'$ and $h=h'$.
 By the induction hypothesis, we have $\ell'-h'-1\le P(T')\le \ell'-1$. Hence, $\ell-h-1\le P(T)\le \ell-1$.
  If  $k\ge 2$ and $d_T(u)=2$, then $u$ becomes a leaf in $T'$ and hence $T'$ has $\ell'=\ell-k+1$ leaves, we have  $P(T)=P(T')+k-1$ (By (PT2)), $\ell=\ell'+k-1$ and $h=h'$, we can again apply the induction. If  $k\ge 2$ and $d_T(u)\ge 3$, we have $P(T)=P(T')+k-1$ (By (PT2)),
  $\ell=\ell'+k$ and $h\ge h'+1$.
Applying the  induction, we  have $ \ell'-h'-1\le P(T')\le \ell'-1.$ and hence $\ell-h-1\le P(T)\le \ell-1$.
\end{proof}

For a tree $T$ with at least two vertices, $T$ is $2$-sparse if and
only if $h=0$. Hence, Theorem \ref{thm3} (cf. Theorem 2.4 in
\cite{attw2010}) is a direct corollary of Theorem \ref{thm5}. In
fact we can get a stronger result for $2$-sparse tree.

\begin{theorem}\label{thm6}
 Let $T$ be a  tree with $\ell\ge 2$ leaves.  Then $T$ is $2$-sparse if and only if $P(T)=\ell-1$.
\end{theorem}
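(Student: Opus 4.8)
The statement is an equivalence, and one direction is essentially free. For the forward direction, note that a tree on at least two vertices is $2$-sparse exactly when it has no heavy edge, i.e.\ $h=0$. Feeding $h=0$ into Theorem~\ref{thm5} collapses the two-sided bound to $\ell-1\le P(T)\le\ell-1$, forcing $P(T)=\ell-1$ (this recovers Theorem~\ref{thm3}). So the real content is the converse, which I would prove in contrapositive form: if $T$ is \emph{not} $2$-sparse, then $P(T)\le\ell-2<\ell-1$.

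Assume then that $T$ has a heavy edge $e=uv$, so that $d_T(u),d_T(v)\ge 3$. Delete $e$; since $T$ is a tree, $T-e$ is the disjoint union of two subtrees $T_u$ containing $u$ and $T_v$ containing $v$. The plan is to bound $P(T)$ by the sum of the path covering numbers of these two pieces and then apply Theorem~\ref{thm5} to each piece separately.

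Two bookkeeping observations drive the argument. First, removing an edge never decreases the path covering number---any path covering of $T-e$ is already a path covering of $T$---so $P(T)\le P(T-e)=P(T_u)+P(T_v)$. Second, deleting $e$ creates no new leaf: in $T_u$ the vertex $u$ still has degree $d_T(u)-1\ge 2$, and likewise for $v$ in $T_v$, while every other vertex retains its degree. Hence $u$ and $v$ remain non-leaves, no existing leaf is destroyed, and the leaf sets partition as $\ell(T)=\ell(T_u)+\ell(T_v)$. The same degree observation shows each of $T_u,T_v$ contains a vertex of degree at least $2$, hence has at least three vertices and in particular at least two leaves, so neither is a single vertex and the upper bound of Theorem~\ref{thm5} legitimately applies to both.

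Combining these, Theorem~\ref{thm5} gives $P(T_u)\le\ell(T_u)-1$ and $P(T_v)\le\ell(T_v)-1$, so
$$P(T)\le P(T_u)+P(T_v)\le \ell(T_u)+\ell(T_v)-2=\ell-2,$$
which completes the contrapositive. I do not anticipate a genuine obstacle here: the only points needing care are the two bookkeeping claims---that cutting a heavy edge produces no new leaves, and that both components are non-degenerate so that Theorem~\ref{thm5} is available---and both follow at once from $d_T(u),d_T(v)\ge 3$. The one thing to watch is that the whole argument hinges on the \emph{upper} bound $P\le\ell-1$ of Theorem~\ref{thm5}, not its lower bound, and on the monotonicity $P(T)\le P(T-e)$ under edge deletion.
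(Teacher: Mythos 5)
Your proof is correct and follows essentially the same route as the paper: the forward direction is Theorem~\ref{thm5} with $h=0$, and the converse deletes a heavy edge, applies the upper bound of Theorem~\ref{thm5} to both components, and uses $\ell(T_u)+\ell(T_v)=\ell$ to force $P(T)\le\ell-2$. The paper phrases this as a contradiction rather than a contrapositive, but the inequality chain is identical.
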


\begin{proof}
{\it Sufficiency.} The result follows from  Theorem \ref{thm5}.

{\it Necessity.} Suppose to the contrary that the number of heavy
edges $h\not= 0$. Let $e$ be a heavy edge of $T$.   $T'$ and $T''$
are two components of $T-e$. It is clear that both $T'$ and $T''$
have at least three vertices since $e$ is a heavy edge of $T$.
Assume that $T'$ ($T''$, respectively) has $\ell'$ ($\ell''$,
respectively) leaves.  By Theorem \ref{thm5},
$P(T')\le \ell'-1$ and  $P(T'')\le \ell''-1.$
Hence,
$$ \ell-1=P(T)\le P(T')+P(T'')\le (\ell'+\ell'')-2.$$

But $\ell'+\ell''=l$ as $e$ is a heavy edge of $T$. Therefore,
$\ell-1=P(T)\le \ell-2.$
This is a contradiction. So, $h=0$ and $T$ is $2$-sparse.
\end{proof}

Now we extend  partial results in Theorem \ref{thm6}  to include
more general trees.

\begin{theorem}\label{thm7}
Let $T$ be a tree with $\ell\ge 2$ leaves in which each heavy
vertex  has at least one light neighbors. Let $S$ be
the subset of $V(T)$ such that each element of $S$ is a heavy vertex
with exactly one light neighbor. Then
 $$ P(T)=\ell-h+s-t-1,$$
where $h$ is the number of heavy edges and $s$ is the size of $S$ and $t$ is the size of a maximum matching of the subgraph induced by $S$.
\end{theorem}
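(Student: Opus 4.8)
The plan is to translate the path–covering problem into an edge–deletion problem. In any tree $T$ on $n$ vertices a path covering is the same thing as a spanning subforest $F$ with all degrees at most $2$ (cycles cannot occur in a tree), and the number of paths equals $n-|E(F)|$. Hence $P(T)=1+D(T)$, where $D(T)$ is the minimum number of edges one must delete from $T$ so that every vertex has degree at most $2$. A handshake count gives $\sum_{v\ \mathrm{heavy}}(d_T(v)-2)=\ell-2$, so $\ell-2$ is exactly the total ``deficiency'' that the deleted edges must absorb at the heavy vertices.

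Next I would analyse how efficiently that deficiency can be absorbed. A deleted heavy edge lowers the degree of two heavy vertices, whereas a deleted edge with a light endpoint helps only one, so we want to delete as many heavy edges as possible. The role of $S$ now becomes visible: if a heavy vertex $v$ has at least two light neighbours ($v\notin S$) we may keep two of its light edges and delete every heavy edge at $v$; but if $v\in S$ it has a single light neighbour, so after reducing $v$ to degree $2$ at least one incident heavy edge must survive (unless we waste a deletion by dropping $v$ below degree $2$). Consequently the surviving heavy edges must cover $S$, and the minimum number of heavy edges covering $S$ is $s-t$: a maximum matching of the subgraph induced by $S$ pairs up $2t$ vertices with $t$ edges, and the remaining $s-2t$ vertices are each covered by one further heavy edge (every vertex of $S$ has at least two heavy neighbours, so such edges exist). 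Thus at most $h-(s-t)$ heavy edges can be deleted, giving $D(T)=(\ell-2)-(h-s+t)$ and the claimed value of $P(T)$.

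To make this rigorous I would prove the two matching inequalities. For the lower bound, take any valid deletion set $R$; we may assume $R$ contains no edge joining two light vertices. Let $x$ be the number of heavy edges in $R$, and for each heavy $v$ let $e_v=\deg_R(v)-(d_T(v)-2)\ge 0$ be its over-deletion; the handshake identity then gives $|R|=(\ell-2)+\sum_v e_v-x$. Let $S_0\subseteq S$ be the set of vertices retaining no heavy edge. Every $v\in S_0$ is over-deleted, so $\sum_v e_v\ge|S_0|$; and the retained heavy edges cover $S\setminus S_0$, so their number $h-x$ is at least $(s-|S_0|)-t$, whence $x\le h-(s-t)+|S_0|$. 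Combining these gives $x-\sum_v e_v\le h-s+t$, hence $|R|\ge \ell-h+s-t-2$ and $P(T)\ge\ell-h+s-t-1$. For the upper bound I would exhibit an $R$ realising equality: keep a minimum heavy-edge cover $C$ of $S$ of size $s-t$, delete all other heavy edges, and trim superfluous light edges so that every heavy vertex reaches degree exactly $2$ (so that no over-deletion occurs, i.e. $\sum_v e_v=0$).

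The main obstacle I expect is this last construction, specifically guaranteeing that the retained cover $C$ can be chosen with maximum degree at most $2$: a generic minimum edge cover may pile several pendant edges onto one vertex, forcing it above degree $2$. I would resolve this by routing the cover edges of the unmatched vertices of $S$ to distinct endpoints (favouring neighbours rich in light edges) and using the acyclicity of $T$ to show that a conflict-free assignment always exists. An alternative entirely in the spirit of the paper is to induct on $h$ and repeatedly delete a heavy edge via Lemmas \ref{lem3} and \ref{lem4}, tracking how $\ell$, $h$, $s$ and $t$ change; the bookkeeping there is heavier, which is why I prefer the edge-deletion formulation.
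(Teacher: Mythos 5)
Your route is genuinely different from the paper's. The paper proves the formula by induction on $|V(T)|$, pruning a deepest spider vertex $v$ together with its leaf neighbours via the standard reductions (PT1)/(PT2) and tracking, case by case, how $\ell$, $h$, $s$ and $t$ change; your reformulation $P(T)=1+D(T)$, with $D(T)$ the minimum number of edge deletions needed to reach maximum degree $2$, turns the theorem into a min--max statement with a separate lower and upper bound. Your lower bound is complete and correct: the identity $|R|=(\ell-2)+\sum_v e_v-x$, the observation that every $v\in S_0$ is over-deleted (such a $v$ has $d_T(v)-1$ incident heavy edges), and the covering inequality $h-x\ge(s-|S_0|)-t$ (an edge set covering a vertex set $W$ has size at least $|W|-\mu(W)$, and $\mu(S\setminus S_0)\le t$) combine exactly as you say. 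What this buys is a transparent explanation of why the correction term is $s-t$, which the paper's bookkeeping obscures.

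The one genuine gap is the step you yourself flag: the existence of a heavy-edge cover $C$ of $S$ with $|C|=s-t$ and $\Delta(C)\le 2$. As written this is asserted rather than proved, and without it the upper bound does not close; note that a minimum edge cover is in general a union of stars and can have unbounded degree, so the degree bound genuinely needs the tree structure. The claim is true, and your instinct about acyclicity is the right one; here is how to finish. Take a maximum matching $M$ of $T[S]$ and let $U$ be the $s-2t$ unmatched vertices of $S$; $U$ is independent in $T[S]$, so every heavy neighbour of a vertex of $U$ is either a matched vertex of $S$ (give it capacity $1$) or a heavy vertex outside $S$ (capacity $2$) --- it cannot lie in $U$, by maximality of $M$. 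For any nonempty $U'\subseteq U$, the subgraph of $T$ induced on $U'\cup N_H(U')$ is a forest containing at least $2|U'|$ of the edges from $U'$ to $N_H(U')$, so $2|U'|\le |U'|+|N_H(U')|-1$, i.e.\ $|N_H(U')|\ge |U'|+1$; since every capacity is at least $1$, Hall's condition for the capacitated assignment holds, and $C=M\cup\{u\,g(u):u\in U\}$ is the desired cover. With that lemma in place your construction (delete the $h-(s-t)$ heavy edges outside $C$, then trim light edges at each heavy vertex down to degree exactly $2$, which is feasible because $\deg_C(v)+L(v)\ge 2$ and $\deg_C(v)\le 2$) yields exactly $\ell-h+s-t-2$ deletions, matching your lower bound.
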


\begin{proof}
 The proof will proceed by induction on the number of vertices in $T$ and use the same method in the proof of Theorem \ref{thm5}.
  When $T$  is a star or $T$ has exactly two vertices, the result clearly holds.   Suppose now that  $T$ is a tree other than a star
   and has at least three vertices. We can also
  choose a vertex $v\in V(T)$ with exactly one non-leaf neighbor $u$ and $k$ leaf neighbors $z_1,\cdots,z_k$.
  Let $S'$ be the subset of $T'$ such that each element of
 $S'$ is a heavy vertex with exactly one light
  neighbor in $T'$ and $t'$ be  the size of a maximum matching of the subgraph induced by $S'$.
  By induction, $P(T')=\ell'-h'+s'-t'-1$, where $\ell'$ ($h'$,
 respectively) is the number of leaves (heavy edges, respectively)
 and $s'$ is the size of  $S'$ in $T'$.

 If $k=1$, then $v$ becomes a leaf in $T'$ and hence $T'$ has $\ell'=\ell$ leaves. So we have $P(T)=P(T')$ (By (PT1)), $\ell=\ell'$, $h=h'$,
 $s=s'$ and $t'=t$. Applying induction, we clearly have $P(T)=\ell-h+s-t-1$. Now, we assume that $k\ge 2$.

 If $d_T(u)=2$,  then $\ell'=\ell-k+1$, $h'=h$, $s'=s$ and $t=t'$.
 Hence, $P(T)=P(T')+k-1=\ell-h+s-t-1$ as desired. If  $d_T(u)=3$   with $u\notin S$ or $d_T(u)\ge 4$,
 then $\ell'=\ell-k$, $h'=h-1$, $s'=s$ and $t'=t$. Hence,  $P(T)=P(T')+k-1=\ell-h+s-t-1$ as desired.
  Now we assume that  $u$ has three neighbors $v$, $u_1$ and $u_2$ and $u\in S$.  Without loss of generality,
 we  assume that $u_1$ is a light vertex and $u_2$ is  a heavy vertex in $T$. If $u_2\notin S$, then
  $\ell'=\ell-k$, $h'=h-2$, $s'=s-1$ and $t'=t$. Hence, $P(T)=P(T')+k-1=\ell-h+s-t-1$ as
  desired. Now we consider the case $u_2\in S$.  In this situation, $d_{T'}(u)=2$,
  $uu_2$ is not a heavy edge in $T'$ and hence $u_2\notin S'$. Thus  $\ell'=\ell-k$, $h'=h-2$ and $s'=s-2$.
  Let $C$ denote the connected component containing $u$ and $u_2$ in the subgraph  induced by $S$.
    Note that $C$ is a tree in which $u$ is a leaf. Let $C'=C-uu_2$. So, it is easy to
   know that the size of a maximum matching in $C'$ than that in $S$ is small one.
 So $t'=t-1$ and Hence  $P(T)=P(T')+k-1=\ell-h+s-t-1$ as
  desired.
\end{proof}

  We say a graph is {\it general $2$-sparse} if each its heavy vertex
has at least two light neighbors. Obviously, a $2$-sparse
graph must be  general $2$-sparse. Furthermore, we have

\begin{theorem}\label{thm8}
 Let $T$ be a  tree with $\ell\ge 2$ leaves and $h$  heavy edges.
 Then $T$ is general $2$-sparse if and only if  $P(T)=\ell-h-1$.
\end{theorem}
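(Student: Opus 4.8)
The plan is to deduce both directions from Theorem \ref{thm7}, which computes $P(T)$ exactly as soon as every heavy vertex owns at least one light neighbor.

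For sufficiency, suppose $T$ is general $2$-sparse. Then every heavy vertex has at least two light neighbors, in particular at least one, so the hypothesis of Theorem \ref{thm7} is met. Moreover no heavy vertex has \emph{exactly} one light neighbor, so the set $S$ of that theorem is empty; hence $s=t=0$ and Theorem \ref{thm7} gives $P(T)=\ell-h-1$ at once.

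For necessity I would argue the contrapositive: if $T$ is not general $2$-sparse, then $P(T)\ge\ell-h>\ell-h-1$. The obstacle is that Theorem \ref{thm7} is unavailable when some heavy vertex has \emph{no} light neighbor, so I first repair this defect. Let $\tilde T$ be obtained from $T$ by attaching one pendant leaf to each heavy vertex that has no light neighbor, and let $r\ge 0$ be the number of leaves added. Then $\tilde T$ is a tree in which every heavy vertex has at least one light neighbor (the offending vertices have just been given one, the others already had one); its heavy vertices and heavy edges are exactly those of $T$, since each new edge joins a leaf and is therefore not heavy and changes no vertex's status; and $\tilde\ell=\ell+r$, $\tilde h=h$. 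Because $T$ is not general $2$-sparse, some heavy vertex has at most one light neighbor, and whether it had none (and was given a pendant) or exactly one, it lies in the set $\tilde S$ for $\tilde T$; hence $\tilde S\ne\emptyset$ and $\tilde s\ge 1$.

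Now I would apply Theorem \ref{thm7} to $\tilde T$ to obtain $P(\tilde T)=(\ell+r)-h+\tilde s-\tilde t-1$, and combine it with the elementary estimate $P(\tilde T)\le P(T)+r$, which holds because attaching one pendant leaf raises the path covering number by at most $1$ (the new leaf can always be taken as a separate singleton path). This yields $P(T)\ge P(\tilde T)-r=\ell-h+\tilde s-\tilde t-1$. Finally, a matching in the subgraph induced by $\tilde S$ saturates at most $\tilde s$ of its $\tilde s\ge 1$ vertices, so $\tilde t\le\lfloor\tilde s/2\rfloor$ and therefore $\tilde s-\tilde t\ge\lceil\tilde s/2\rceil\ge 1$; consequently $P(T)\ge\ell-h>\ell-h-1$, as required. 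The only genuinely delicate point is the pendant-attachment reduction to Theorem \ref{thm7} and the verification that it alters $\ell$ and $h$ in exactly the controlled way recorded above; the two inequalities $P(\tilde T)\le P(T)+r$ and $\tilde t\le\tilde s-1$ are then routine.
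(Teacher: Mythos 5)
Your proof is correct. The sufficiency half coincides with the paper's (both just set $s=t=0$ in Theorem \ref{thm7}), but your necessity half takes a genuinely different route. The paper proves necessity by induction on $h$: it selects a heavy edge $e=uv$ farthest from a root, uses Lemma \ref{lem3} to split $T$ at $e$ without changing $P$, squeezes the resulting components between the two-sided bounds of Theorem \ref{thm5} to force $P(T')=\ell'-h'-1$, $P(T'')=\ell''-h''-1$ and $h'+h''+1=h$ simultaneously, applies the inductive hypothesis to both components, and then verifies by hand that reattaching $e$ preserves the general $2$-sparse property, with Theorem \ref{thm6} as the base case $h=0$. You instead prove the contrapositive by reducing to Theorem \ref{thm7} itself: attaching a pendant to each heavy vertex lacking a light neighbor produces a tree $\widetilde{T}$ satisfying that theorem's hypothesis, with $\widetilde{\ell}=\ell+r$, $\widetilde{h}=h$ and $\widetilde{s}\ge 1$, and combining the exact formula for $P(\widetilde{T})$ with the singleton-path bound $P(\widetilde{T})\le P(T)+r$ and the trivial matching bound $\widetilde{s}-\widetilde{t}\ge\lceil\widetilde{s}/2\rceil\ge 1$ gives $P(T)\ge\ell-h>\ell-h-1$. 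All the bookkeeping you flag checks out: the pendants create no new heavy vertices or heavy edges, the offending vertex lies in $\widetilde{S}$ whether it had zero or one light neighbor, and the two auxiliary inequalities are immediate. Your argument is shorter and avoids a second induction, at the cost of leaning on the full strength of Theorem \ref{thm7} applied to a modified tree; the paper's induction is heavier but also exposes the decomposition of general $2$-sparse trees along heavy edges, which is the same splitting technique it exploits again in Section 5.
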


\begin{proof}
{\it  Sufficiency.} It follows from Theorem \ref{thm7} by using
$s=0$ and $t=0$.

{\it Necessity.} We use induction on $h$. If $h=0$, then
$P(T)=\ell-1$ and hence it is ok by Theorem \ref{thm6}. We assume
that $h\not=0$. Let $r$ be an arbitrary vertex of $T$ and let $e=uv$
be a heavy edge of $T$ such that the distance between $r$ and $e$ is
largest. Furthermore, we assume that  $d_T(r,u)=d_T(r,v)-1$.  By the
selection of $e$,  all neighbors of $v$ except that $u$, say
$v_1,\ldots, v_k$ ($k\ge 2$), are ends of vines.
 By Lemma \ref{lem3}, $e$ is not used in some minimum path
covering of $T$, and hence  $P(T)=P(T-e)$. Suppose that $T'$ ($T''$,
respectively) is the connected component of $T-e$ containing $u$
($v$, respectively).  It is clear that both $T'$ and $T''$ have at
least three vertices. Assume that $T'$ ($T''$, respectively) has
$\ell'$ ($\ell''$, respectively) leaves and $h'$ ($h''$,
respectively) heavy edges. By Theorem \ref{thm5},
$P(T')\ge \ell'-h'-1$ and $P(T'')\ge \ell''-h''-1.$ Note that
$\ell'+\ell''=l$ and  $ h'+h''+1\le h.$ Hence, $$\ell-h-1=P(T)=P(T')+P(T'')\ge \ell'-h'-1+\ell''-h''-1\ge \ell-h-1.$$


This implies that $P(T')=\ell'-h'-1$, $P(T'')=\ell''-h''-1$ and
$h'+h''+1=h$ hold together. By induction hypothesis, we have both
$T'$ and $T''$ are general $2$-sparse, and $T$ is obtained from $T'$
and $T''$ by adding an edge $e=uv$. It is obvious that the vertex $v$ has least
two light neighbors. If $d_{T'}(u)\ge 3$, it is clear
that any heavy vertex in $T'$ has  at least two
light neighbors when $e$ is added. We assume that $d_{T'}(u)= 2$ and
$u_1, u_2$ are its neighbors in $T'$. We claim that $u_1$ and $u_2$
are both light vertices. If not, without loss of generality, we
assume that $u_1$ is a heavy vertex in $T$. Let $f=uu_1$. Then $f$
is a heavy edge in $T$, but it is not a heavy edge in $T'$ as
$d_{T'}(u)=2$. So, $h'+h''\le h-2$, it contracts with $h'+h''+1=h$.
Therefore, $u_1$ and $u_2$ are both light vertices in $T$, and hence
$T$ is general $2$-sparse.
\end{proof}

 The following result is a direct corollary of Theorem
 \ref{thm1}, Theorem \ref{thm} and Theorem \ref{thm7}.

 \begin{corollary}\label{cor1}
 Let $T$ be a non-path tree satisfying the conditions in Theorem \ref{thm7}.
 Then $\lambda (T^c)=n+\ell-h+s-t-3$ and $\rho
 (T^c)=\ell-h+s-t-2$,  where $n$ ($\ell$, $h$, respectively) is the number of vertices (leaves, heavy
edges, respectively) and $s$ ($t$, respectively) is the size of $S$ (a maximum matching of the subgraph induced by $S$, respectively).
 \end{corollary}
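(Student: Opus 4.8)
The plan is to derive Corollary~\ref{cor1} purely by combining three results already established in the excerpt: the Georges--Mauro--Whittlesey relation (Theorem~\ref{thm1}), the Georges--Mauro relation between hole index and path covering number (Theorem~\ref{thm}), and the exact value $P(T)=\ell-h+s-t-1$ from Theorem~\ref{thm7}. No new combinatorial argument is needed; the work is bookkeeping and verifying the hypotheses of the two labeling theorems.

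\medskip
\noindent\textbf{Step 1: Show $P(T^c)\ge 2$ so that Theorem~\ref{thm1}(2) applies.}
First I would observe that $T$ is a non-path tree satisfying the hypotheses of Theorem~\ref{thm7}, so it has $\ell\ge 2$ leaves and at least one heavy vertex. I need $P(T^c)\ge 2$, equivalently (by Theorem~\ref{thm1}) $\lambda(T^c)>n-1$. The cleanest route is to note that $T$ is not a path, so $T$ is not the complement of a graph with a Hamiltonian path; more directly, I would feed the value of $P(T)$ into Theorem~\ref{thm7} applied to the \emph{original} statement. But the object here is $P(T^c)$, not $P(T)$, so the correct reading is that Theorems~\ref{thm1} and~\ref{thm} take $G=T^c$ and refer to $P((T^c)^c)=P(T)$. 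Thus I set $G=T^c$, a graph on $n$ vertices, and the hypothesis ``$\lambda(G)\ge n-1$'' of Theorem~\ref{thm} must be checked. Since $T$ is a non-path tree, $P(T)=\ell-h+s-t-1\ge 2$ (this inequality should be verified from the structure: a non-path tree has $\ell\ge 3$, or $\ell=2$ forces $T$ a path, which is excluded), and therefore by Theorem~\ref{thm1}(2) we are in the regime $P(T)=P((T^c)^c)\ge 2$.

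\medskip
\noindent\textbf{Step 2: Apply Theorem~\ref{thm1}(2) to compute $\lambda(T^c)$.}
With $P(T)\ge 2$ confirmed, Theorem~\ref{thm1}(2) with $G=T^c$ gives
\[
\lambda(T^c)=n+P\bigl((T^c)^c\bigr)-2=n+P(T)-2.
\]
Substituting the value $P(T)=\ell-h+s-t-1$ from Theorem~\ref{thm7} yields
\[
\lambda(T^c)=n+(\ell-h+s-t-1)-2=n+\ell-h+s-t-3,
\]
which is the first asserted formula.

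\medskip
\noindent\textbf{Step 3: Apply Theorem~\ref{thm} to compute $\rho(T^c)$.}
The formula from Step~2 shows $\lambda(T^c)=n+\ell-h+s-t-3\ge n-1$ precisely when $\ell-h+s-t\ge 2$, i.e.\ when $P(T)\ge 1$, which holds since $P(T)\ge 2$; hence the hypothesis $\lambda(T^c)\ge n-1$ of Theorem~\ref{thm} is met. Then Theorem~\ref{thm} with $G=T^c$ gives $\rho(T^c)=P\bigl((T^c)^c\bigr)-1=P(T)-1=\ell-h+s-t-2$, the second formula. The main obstacle is not any deep step but rather the careful hypothesis-checking in Step~1: one must confirm that a non-path tree under the Theorem~\ref{thm7} hypotheses really does satisfy $P(T)\ge 2$ (so that case~(2) of Theorem~\ref{thm1}, rather than case~(1), is invoked) and that $T^c$ is connected, so that $\lambda(T^c)$ and $\rho(T^c)$ are well-defined in the intended sense. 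Once these structural facts are pinned down, the corollary is an immediate substitution.
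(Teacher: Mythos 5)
Your proof is correct and takes essentially the same approach as the paper, which gives no written argument and simply states the result as a direct corollary of Theorems \ref{thm1}, \ref{thm} and \ref{thm7}; your substitution of $P(T)=\ell-h+s-t-1$ into Theorem \ref{thm1}(2) and Theorem \ref{thm} with $G=T^c$ is exactly that argument. The one hypothesis check you flag, $P(T)\ge 2$, follows immediately from the fact that a tree admitting a one-path covering is itself a path, which is excluded.
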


At the end of this section, we remark that it would be interesting
to investigate the path covering number of other more general
families of trees.

\section{Island sequences for  complements of trees}                                  

Given a minimum path covering $\cal P$ of a graph $G$, the {\it path
sequence} of  $\cal P$ is the ordered sequence of the  numbers of
vertices of paths in $\cal P$ in nondecreasing order (note that this
definition allows for repeated cardinalities).  As shown in
\cite{gmw1994}, if $P(G)\ge 2$, then a minimum path covering $\cal
P$ of a graph $G$ can induce a $\lambda (G^c)$-labeling $f_{\cal P}$
of $G^c$ with $P(G)-1$ holes and  the  island sequence of $f_{\cal
P}$ is same as the path sequence of  $\cal P$. We refer readers to
\cite{gmw1994} for the complete proof. Hence, $G^c$ admits multiple
 island sequences if and only if $G$ admits distinct path
sequences and $P(G)\ge 2$. In this section, we will establish a
constructive characterization for trees with unique path sequence.

A {\it labeled generalized star} is a generalized star in which all
neighbors of its center are labeled $B$ and other non-leaf vertices
 are labeled $A$. Figure \ref{fig3} (a) illustrates a labeled generalized star
with three vines of length $3$.  For convenience, a path $P$ of
length $2k$ is called a labeled generalized star with two vines of
length $k-1$ in which  two neighbors of its  center are labeled $B$
and other non-leaf vertices  are labeled $A$. Figure \ref{fig3} (b)
illustrates a labeled generalized star with two vines of length $3$. A
{\it labeled path} is a path with at least three vertices in which
all non-leaf vertices are labeled $A$. Figure \ref{fig3} (c) illustrates a
labeled path of length $5$.

\begin{figure}[!htbp]
\hfill
\subfigure[]{
\begin{tikzpicture}[
  every node/.style={inner sep=1pt,circle,draw}]
\path node[label=right:$A$](0) at(0:0) {}
      node[label=right:$B$](1) at(90:.7) {}
      node[label=right:$A$](4) at(90:1.4) {}
      node[label=right:$A$](7) at(90:2.1) {}
      node(10) at(90:2.8) {}
      node[label=30:$B$](2) at(300:.7) {}
      node[label=30:$A$](5) at(300:1.4) {}
      node[label=30:$A$](8) at(300:2.1) {}
      node(11) at(300:2.8) {}
      node[label=150:$B$](3) at(240:.7) {}
      node[label=150:$A$](6) at(240:1.4) {}
      node[label=150:$A$](9) at(240:2.1) {}
      node(12) at(240:2.8) {};
\draw (10) -- (7) -- (4) -- (1) -- (0) -- (2) -- (5) -- (8) -- (11)
      (0) -- (3) -- (6) -- (9) -- (12);
\useasboundingbox(-1.4,-2.8);
\end{tikzpicture}}
\hfill
\subfigure[]{
\begin{tikzpicture}[
  every node/.style={inner sep=1pt,circle,draw}]
\path node[label=right:$A$](0) at(0:0) {}
      node[label=right:$B$](1) at(90:.7) {}
      node[label=right:$A$](3) at(90:1.4) {}
      node[label=right:$A$](5) at(90:2.1) {}
      node(7) at(90:2.8) {}
      node[label=right:$B$](2) at(270:.7) {}
      node[label=right:$A$](4) at(270:1.4) {}
      node[label=right:$A$](6) at(270:2.1) {}
      node(8) at(270:2.8) {};
\draw (7) -- (5) -- (3) -- (1) -- (0) -- (2) -- (4) -- (6) -- (8);
\end{tikzpicture}}
\hfill
\subfigure[]{
\begin{tikzpicture}[
  every node/.style={inner sep=1pt,circle,draw}]
\path node[label=right:$A$](0) at(0:0) {}
      node[label=right:$A$](1) at(90:.7) {}
      node[label=right:$A$](3) at(90:1.4) {}
      node(5) at(90:2.1) {}
      node[label=right:$A$](2) at(270:.7) {}
      node(4) at(270:1.4) {};
\draw (5) -- (3) -- (1) -- (0) -- (2) -- (4);
\useasboundingbox(0,-2.45);
\end{tikzpicture}}
\hfill\null
\caption{}\label{fig3}
\end{figure}

To state the constructive characterization of trees with unique path
sequence, we need to introduce  a family of labeled trees and three
types of operations.

A  family of labeled  trees $\mathcal {F}$ is defined as
$\mathcal{F}$ $=\{T~|~T$ is obtained from a labeled generalized star
with at least three vines or a labeled path by a finite sequence of
 operations of Type-1, Type-2 or Type-3$\}$.

\vskip 0.2cm
 Let $T\in \mathcal {F}$ be a labeled tree in which all non-leaf vertices are
labeled $A$ or $B$.

\noindent{\bf Type-1 operation:}   Attach a labeled generalized star $S$
with at least three vines  to $T$ by adding an edge $uv$, where $u$
is a vertex in $T$ labeled $A$ and $v$ is the center of $S$. Figure \ref{fig4} (a) illustrates this operation.\\

\noindent{\bf Type-2 operation:}   Attach a labeled path $P$ to $T$
by adding an edge $uv$, where $u$ is a vertex in $T$ labeled $A$ and
$v$ is a non-leaf vertex in $P$.  Figure \ref{fig4} (b) illustrates this operation.\\

\noindent{\bf Type-3 operation:}   Attach a labeled generalized star $S$
to $T$ by adding an edge $uv$, where  $v$ is the center of $S$ and
$u$ is a vertex in $T$ labeled $B$ such that  the length of each
vine in $S$ is same as that of  the labeled generalized star $S'$, which
is  the original/attached  labeled generalized star  containing $u$.  Figure \ref{fig4} (c) illustrates this operation.\\


\begin{figure}[!htbp]
\hfill
\subfigure[Type-1 operation]{
\begin{tikzpicture}[
  vertex/.style={inner sep=1pt,circle,draw}]
\path node[vertex,label=30:$A$](0) at(0:0) {}
      node[vertex,label=right:$A$](1) at(90:.7) {}
      node[vertex](4) at(90:1.4) {}
      node[vertex,label=60:$A$,label=left:$u$](2) at(330:.7) {}
      node[vertex](5) at(330:1.4) {};
\path[xshift=6mm,yshift=-2.1cm]
      node[vertex,label=right:$v$,label=left:$A$](10) at(0:0) {}
      node[vertex,label=below right:$B$](11) at(270:.7) {}
      node[vertex,label=below right:$A$](14) at(270:1.4) {}
      node[vertex](17) at(270:2.1) {}
      node[vertex,label=30:$B$](12) at(300:.7) {}
      node[vertex,label=30:$A$](15) at(300:1.4) {}
      node[vertex](18) at(300:2.1) {}
      node[vertex,label=150:$B$](13) at(240:.7) {}
      node[vertex,label=150:$A$](16) at(240:1.4) {}
      node[vertex](19) at(240:2.1) {};
\draw (4) -- (1) -- (0) -- (2) -- (5);
\draw (17) -- (14) -- (11) -- (10) -- (12) -- (15) -- (18)
      (10) -- (13) -- (16) -- (19);
\draw (2) -- (10);
\node[draw,dashed,ellipse,fit=(4)(5),label=right:$T$] {};
\node[draw,dashed,ellipse,fit=(10)(18)(19),label=right:$S$] {};
\end{tikzpicture}}
\hfill
\subfigure[Type-2 operation]{
\begin{tikzpicture}[
  vertex/.style={inner sep=1pt,circle,draw}]
\path node[vertex,label=30:$A$](0) at(0:0) {}
      node[vertex,label=right:$A$](1) at(90:.7) {}
      node[vertex](4) at(90:1.4) {}
      node[vertex,label=60:$A$,label=left:$u$](2) at(330:.7) {}
      node[vertex](5) at(330:1.4) {};
\path[xshift=6mm,yshift=-2.1cm]
      node[vertex,label=right:$v$,label=left:$A$](10) at(0:0) {}
      node[vertex](12) at(300:.7) {}
      node[vertex,label=150:$A$](13) at(240:.7) {}
      node[vertex,label=150:$A$](16) at(240:1.4) {}
      node[vertex](19) at(240:2.1) {};
\draw (4) -- (1) -- (0) -- (2) -- (5);
\draw (10) -- (12)
      (10) -- (13) -- (16) -- (19);
\draw (2) -- (10);
\node[draw,dashed,ellipse,fit=(4)(5),label=right:$T$] {};
\node[draw,dashed,ellipse,fit=(10)(12)(19),label=right:$P$] {};
\useasboundingbox(0,-4.2);
\end{tikzpicture}}
\hfill
\subfigure[Type-3 operation]{
\begin{tikzpicture}[
  vertex/.style={inner sep=1pt,circle,draw}]
\path node[vertex,label=30:$A$](0) at(0:0) {}
      node[vertex,label=right:$B$](1) at(90:.7) {}
      node[vertex,label=right:$A$](4) at(90:1.4) {}
      node[vertex](7) at(90:2.1) {}
      node[vertex,label=60:$B$,label=left:$u$](2) at(330:.7) {}
      node[vertex,label=60:$A$](5) at(330:1.4) {}
      node[vertex](8) at(330:2.1) {}
      node[vertex,label=120:$B$](3) at(210:.7) {}
      node[vertex,label=120:$A$](6) at(210:1.4) {}
      node[vertex](9) at(210:2.1) {};
\path[xshift=6mm,yshift=-2.1cm]
      node[vertex,label=right:$v$,label=left:$A$](10) at(0:0) {}
      node[vertex,label=below right:$B$](11) at(270:.7) {}
      node[vertex,label=below right:$A$](14) at(270:1.4) {}
      node[vertex](17) at(270:2.1) {}
      node[vertex,label=30:$B$](12) at(300:.7) {}
      node[vertex,label=30:$A$](15) at(300:1.4) {}
      node[vertex](18) at(300:2.1) {}
      node[vertex,label=150:$B$](13) at(240:.7) {}
      node[vertex,label=150:$A$](16) at(240:1.4) {}
      node[vertex](19) at(240:2.1) {};
\draw (7) -- (4) -- (1) -- (0) -- (2) -- (5) -- (8)
      (0) -- (3) -- (6) -- (9);
\draw (17) -- (14) -- (11) -- (10) -- (12) -- (15) -- (18)
      (10) -- (13) -- (16) -- (19);
\draw (2) -- (10);
\draw[dashed] (7)+(0:4mm) arc(0:180:4mm)
     .. controls(150:10mm) .. ($(9)+(130:4mm)$) arc(130:310:4mm)
     .. controls(270:10mm) .. ($(8)+(240:4mm)$) arc(-120:60:4mm)
     .. controls(30:10mm) .. node[label=30:$T$] {} ($(7)+(0:4mm)$);
\node[draw,dashed,ellipse,fit=(10)(18)(19),label=right:$S$] {};
\end{tikzpicture}}
\hfill\null
\caption{}\label{fig4}
\end{figure}

\begin{theorem}\label{thm9}
Every labeled tree in  $\mathcal F$ admits unique path sequence.
\end{theorem}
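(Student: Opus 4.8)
I will prove the theorem by induction on the number of Type-1/2/3 operations used to build $T$. To feed the induction I prove the stronger assertion $(\ast)$: every $T\in\mathcal F$ has a unique path sequence, and moreover every vertex labeled $A$ is an internal vertex of \emph{every} minimum path covering of $T$. The property $(\ast)$ records the combinatorial meaning of the labels and is exactly what controls how a newly added edge can be used. For the base objects this is immediate from Lemmas~\ref{lem1} and~\ref{lem2}: a labeled path is its own unique minimum covering, and for a labeled generalized star with center $c$ and $d$ vines of $\ell$ vertices, Lemma~\ref{lem1} keeps every vine intact while Lemma~\ref{lem2} forces $c$ to be internal, so every minimum covering joins exactly two vines through $c$ and leaves the other $d-2$ as separate paths. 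The path sequence is therefore always $(\ell,\dots,\ell,2\ell+1)$, and the $A$-vertices (the center and the interior vine vertices) are internal.

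Suppose $T=T'+H+uv$ arises by a Type-1 or Type-2 operation, so $H$ is the attached star or path and $u$ is labeled $A$ in $T'$. In either case the attaching vertex $v$ is a vine-center, so Lemma~\ref{lem3} (part (a) for Type-1, part (b) for Type-2, since $uv$ meets no end of a vine of $H$) gives a minimum covering avoiding $uv$ and $P(T)=P(T')+P(H)$. I claim $uv$ lies in no minimum covering. Indeed, if some minimum covering used $uv$, then deleting $uv$ would split its path through the bridge into a $T'$-piece ending at $u$ and an $H$-piece ending at $v$; but $v$ must be internal in every minimum covering of $H$ (Lemma~\ref{lem2} for the star, or because $v$ is a non-leaf of the path), so the $H$-piece is non-minimum, forcing the $T'$-piece to be a \emph{minimum} covering of $T'$ in which $u$ is an endpoint --- contradicting $(\ast)$ for $T'$. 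Hence every minimum covering of $T$ restricts to minimum coverings of $T'$ and of $H$, its path sequence is the multiset union of their (unique) sequences, and $(\ast)$ for $T$ follows.

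The Type-3 operation is the heart of the matter. Here $u$ is labeled $B$, hence the initial vertex of a vine $V_u$ of length $\ell$ of a star $S'$ with center $c'$, and $S$ is attached with \emph{all vines of the same length $\ell$}. Again $v$ is a vine-center, so Lemma~\ref{lem3} yields $P(T)=P(T')+P(S)$ and a minimum covering avoiding $uv$; but now $uv$ \emph{can} occur in a minimum covering, so I must show the sequence is nonetheless forced. I classify minimum coverings by which of the three edges at $u$ --- the edge to $c'$, the first edge $ua_1$ of $V_u$, and $uv$ --- are used. Any configuration that orphans the residual vine $a_1\cdots a_{\ell-1}$ as a separate path is ruled out as non-minimum: rerouting that vine back onto $u$ frees the center $c'$, which (when heavy) still carries at least two intact vines and can be re-merged, strictly reducing the number of paths. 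In the surviving coverings the whole vine $V_u$ stays together. If $V_u$ is merged through $c'$ inside $T'$, the covering restricts to minimum coverings of $T'$ and $S$ and we are done; if instead $V_u$ is rerouted through $v$, deleting $uv$ exhibits a minimum covering of $T'$ in which $V_u$ is a separate path of $\ell$ vertices, while the rerouted path has $\ell+1+\ell=2\ell+1$ vertices. Comparing with the canonical covering, exactly one separate block of size $\ell$ migrates from the $T'$-side to the $S$-side and the $2\ell+1$ block occurs with the same multiplicity on both sides, so the multiset of path sizes is unchanged; this is precisely where the matching $|V_u|=\ell$ is used. When $c'$ is light (the path-star case) the same rerouting instead splits off the far part of $S'$ and strictly increases the count, so $uv$ is never used. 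In all cases the sequence equals $\mathrm{PS}(T')\cup\mathrm{PS}(S)$, which is unique by induction, and a routine check (as in the $A$-attachment) preserves $(\ast)$.

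The step I expect to be the main obstacle is this Type-3 analysis: correctly enumerating how the three edges at the $B$-vertex $u$ may be used in a minimum covering, discarding the configurations that orphan the residual vine, and carrying out the multiset bookkeeping which shows that the rerouted block of size $2\ell+1$ absorbs exactly the right vines so that the path sequence is invariant --- with the genuinely different behaviour of a heavy versus a light center $c'$ handled as separate subcases. The base cases and the $A$-attachments, by contrast, reduce cleanly to Lemmas~\ref{lem1}--\ref{lem3} together with the invariant $(\ast)$.
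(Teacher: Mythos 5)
Your overall strategy --- induction on the number of operations, a strengthened inductive hypothesis, showing the attaching edge is never used for Type-1/Type-2, and a separate analysis of when the Type-3 edge is used --- is the same as the paper's, and your base cases and Type-1/Type-2 step are sound. However, your inductive invariant $(\ast)$ is too weak to carry the Type-3 step, and this is a genuine gap. The paper's induction carries a third condition besides yours: \emph{for every vertex $w$ labeled $B$, any path ending at $w$ in any minimum path covering is exactly the vine of the labeled generalized star that contained $w$ in the construction process}. This condition is precisely what forces the $T'$-side of a path through the new edge $uv$ to have exactly $\ell$ vertices, hence the merged path to have $2\ell+1$ vertices. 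Your substitute --- a local analysis at $u$ via ``the three edges at $u$'', ``the residual vine $a_1\cdots a_{\ell-1}$'', and ``$c'$ still carries at least two intact vines'' --- relies on a local picture that is not stable under the construction: (i) $u$ may receive several Type-3 attachments, so it need not have exactly three incident edges; (ii) the interior vertices $a_i$ of $V_u$ are labeled $A$ and may themselves receive Type-1/Type-2 attachments, so $a_1\cdots a_{\ell-1}$ need not be a vine of $T$, Lemma \ref{lem1} no longer forces it to be a separate path, and a path using $ua_1$ and $uv$ could leave $V_u$ at a heavy $a_i$, making its $T'$-side length different from $\ell$; (iii) other vines of $S'$ may also have been hit by Type-3 operations, so $c'$ need not be the common center of two vines of $T$ and your Lemma \ref{lem2} rerouting contradiction is unavailable. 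Consequently the key sentence ``deleting $uv$ exhibits a minimum covering of $T'$ in which $V_u$ is a separate path of $\ell$ vertices'' is exactly the missing invariant and is unproved in your setup. (Your ``light $c'$'' subcase also cannot arise: a $B$-labeled vertex is by construction adjacent to the center of a generalized star with at least two vines, possibly plus an attachment edge, so $c'$ is always heavy.) The repair is to add the $B$-vertex condition to the induction hypothesis and verify it after each operation, as the paper does; with that in place your multiset bookkeeping at the end is correct.
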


\begin{proof}
Suppose $T$ is  a labeled tree in $\mathcal F$. Let $s(T)$ denote
the number of operations required to construct $T$. We need prove a
more general statement by induction on $s(T)$ as follows:

\noindent(C1) $T$ admits unique path sequence;

\noindent(C2) Every vertex  labeled $A$ is an internal vertex in
any minimum path covering of $T$;

\noindent(C3) For each vertex labeled $B$, if it is one end of a
path in some minimum path covering of $T$, then the path is exactly
the vine, containing this vertex, of the  labeled generalized star
in the construction process of $T$.

If $s(T)=0$, then $T$ is a labeled generalized star with at least three
vines or a labeled path. It is obvious that $T$ satisfies (C1)-(C3),
and hence the assertion is true.

Assume that $T'$ satisfies  (C1)-(C3) for all trees $T'\in \mathcal
{F}$ with $s(T')<k$, where $k\ge 1$ is an integer. Let $T$ be a tree
with $s(T)=k$. Then $T$ is obtained from a tree $T'$ by one of a
Type-1, a Type-2 or a Type-3 operation, and $T'\in \mathcal {F}$.
Applying the inductive hypothesis to $T'$, $T'$ satisfies (C1)-(C3).

If $T$ is obtained from a tree $T'$ by a Type-1 operation, by Lemma
\ref{lem3}, $P(T)=P(T')+\ell-1$, where $\ell\ge 3$ is the number of leaves
in the generalized star $S$. Suppose that  vertices labeled $B$ in $S$
are $v_1,\ldots,v_{\ell}$. Let $e=uv$ and $e_i=vv_i$ for $i=1,\ldots,\ell$.
Given a minimum path covering of $T$, by Lemma \ref{lem2}, $v$ is an
internal vertex in a path $P$. If $P$ contains $e$, by swapping $e$
with some $e_i$, we obtain another minimum path covering of $T$,
which also induces a minimum path covering of $T'$ and $e$ is not
used. Meanwhile, $u$ is one of ends of a path in this minimum path
covering of $T'$. It contradicts with the fact $T'$ satisfies (C2),
i.e., every vertex in $T'$ labeled $A$ is an internal vertex in
every minimum path covering of $T'$. Hence, $e=uv$ is not used in
any minimum path covering of $T$.  Since both $T'$ and $S$ satisfy
(C1), $T$ admits unique path sequence, i.e.,  $T$ satisfies (C1).
By induction hypothesis, every vertex in $T'$ clearly satisfies
(C2)-(C3). Lemma \ref{lem1} implies that every vertex labeled $A$ in
$S$ satisfies (C2). Let $P$ be the path containing the vertex $v$ in
a minimum path covering of $S$. Without loss of generality, we
assume that $e_1\in E(P)$ and $e_2\in E(P)$. Then the vine
containing $v_i$ in $S$ ($3\le i\le \ell$) is exactly a path in this
minimum path covering of $S$.  By swapping $e_1$ ($e_2$,
respectively) with $e_{\ell}$, we can get  a new minimum path covering of
$S$ such that the vine containing $v_1$ ($v_2$, respectively) in $S$
is exactly a path in this minimum path covering. Therefore, every
vertex in $S$ satisfies (C3).

If $T$ is obtained from a tree $T'$ by a Type-2 operation, we can
use similar arguments as above and get that $e=uv$ is not used in
any minimum path covering of $T$, and hence the labeled path $P$
attached into $T'$ is exactly a path in any minimum path covering of
$T$. It is easy to check that $T$ satisfies  (C1)-(C3) in this
situation.

If $T$ is obtained from a tree $T'$ by a Type-3 operation,  Lemma
\ref{lem3} implies that  $P(T)=P(T')+\ell-1$, where $\ell$ is the number
of leaves in the labeled generalized star $S$. By induction hypothesis,
both $T'$ and $S$ have unique path sequence. Suppose that the unique
path sequence of $T'$ is $(x_1,\ldots,x_{P(T')})$. Obviously,  the
unique path sequence of $S$ is $( 2s_1+1, s_1,\ldots,s_1)$, where
the length of  path sequence of $S$ is $\ell-1$ and $s_1$ is the number
of vertices in each vine of $S$. It is clear that
$(x_1,\ldots,x_{P(T')}, s_1,\ldots,s_1, 2s_1+1)$ is a path sequence
of $T$ if we do not consider nondecreasing order. When the edge $uv$
is not used in a minimum path covering of $T$, its path sequence is
clearly $(x_1,\ldots,x_{P(T')}, s_1,\ldots,s_1, 2s_1+1)$ if the
nondecreasing order is not considered. Hence we assume that $uv$ is
used in a minimum path covering $\mathcal P$ of $T$ and $P\in
\mathcal P$  is the path  containing the edge $uv$ and  with $p$
vertices. By Lemmas \ref{lem1} and \ref{lem2}, there is a vine of $S$
as a path in $\mathcal P$. By swapping  the edge $uv$ and the edge
$wv$, where $w$ is one of ends of this vine, we get a new minimum
path covering $\mathcal P'$, in which the edge $uv$ is not used.  In
$\mathcal P'$, $v$ is an end of a path, say $P'$, with $p-(s_1+1)$
vertices. As $\mathcal P'$ restricted to the tree $T'$ induce  a
minimum path covering of $T''$,  by induction, $P'$ is exactly the
vine containing $v$ in the  labeled generalized star. Type-3
operation implies that $p-(s_1+1)=s_1$. Hence, $p=2s_1+1$. Since
both $T'$ and $S$ admit the unique path sequence, the  path sequence
induced by $\mathcal P'$ is $(x_1,\ldots,x_{P(T')}, s_1,\ldots,s_1,
2s_1+1)$ if the nondecreasing order is not considered. Note that
$\mathcal P$ can also be obtained from $\mathcal P'$ by swapping
$wv$ and $uv$. Hence,  $\mathcal P$ admits the same path sequence
with $\mathcal P'$. Therefore, $T$ satisfies (C1).

Let $w$ be a vertex in $T$ labeled $A$. If $w$ is a vertex in $S$,
Lemmas \ref{lem1} and \ref{lem2} implies that $w$ is an internal
vertex in every minimum path covering of $T$. We assume that $w\in
V(T')$. Let $\mathcal P$ be a minimum path covering of $T$ in which
$w$ is an end of a path.  Without loss of generality, we assume that
the edge $uv$ is used in $\mathcal P$, and hence there is a path
$P\in \mathcal P$ containing $uv$. Similarly, by swapping $uv$ and
an edge incident to $v$, we can get a new minimum path covering
$\mathcal P'$ of $T$  in which $uv$ is not used. Hence, $\mathcal
P'$ restricted to $T'$ induces a minimum path covering of $T'$.
Since $w$ is an end of a path in $\mathcal P$, then $w$ is also an
end of a path in this minimum path covering of $T'$. However, by
induction hypothesis, $T'$ satisfies (C2). It is a contradiction.
Therefore, $w$ must be an internal vertex in $\mathcal P$, and hence
$T$ satisfies (C2).

 By Lemma \ref{lem3},  any minimum path covering of $T'$ and any minimum path covering of $S$
consists of a minimum path covering of $T$. By induction hypothesis,
both $T'$ and $S$ satisfy (C3). Hence, it is easy to see that $T$
satisfies (C3).
\end{proof}

\begin{theorem}\label{thm10}
Let $T$ be a tree with at least three vertices. If $T$ admits unique
path sequence, then  we can labeled $T$  by $A$ and $B$ such that it
is an element of $\mathcal F$.
\end{theorem}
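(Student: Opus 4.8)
The plan is to establish the converse of Theorem~\ref{thm9} by induction on $|V(T)|$, reversing the construction: at each step I locate a pendant generalized star or path that can be peeled off, verify that the remaining tree still admits a unique path sequence, label it by the inductive hypothesis, and then re-attach the peeled piece through the matching Type-1, Type-2 or Type-3 operation. The labels of the re-attached piece are dictated by the operation (center $A$, its neighbours $B$, deeper vine vertices $A$ for a generalized star; all internal vertices $A$ for a path), so the only freedom to control is the label landing on the attachment vertex and the possible vine-length constraint.

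For the base of the induction there are two cases. If $T$ has no heavy vertex then $T$ is a path on at least three vertices, which I label as a labeled path. If $T$ has exactly one heavy vertex $v$, then $T$ is a spider with $d_T(v)\ge 3$ legs; the key observation is that a spider whose legs are not all of equal length admits two distinct path sequences, since joining two legs through $v$ and leaving the rest separate produces, for different choices of the joined pair, path sequences with different largest member. Hence uniqueness forces all legs equal, $T$ is a generalized star with at least three vines, and I label it in the prescribed way. In either case $T\in\mathcal F$ with zero operations.

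For the inductive step I root $T$ at a heavy vertex and let $v$ be a heavy vertex of maximum depth, so every branch of $v$ other than the one toward the root is a vine; call these $S_1,\dots,S_m$ with $m=d_T(v)-1\ge 2$, let $e=uv$ be the edge from $v$ to its parent $u$, and let $S$ consist of $v$ together with $S_1,\dots,S_m$. By Lemma~\ref{lem3} the edge $e$ is unused in some minimum path covering and $P(T)=P(T-e)$, so $T-e=S\sqcup T'$ with $T'=T-V(S)$. Because every minimum covering of $S$ may be combined independently with every minimum covering of $T'$ to give a minimum covering of $T$, the uniqueness of the path sequence of $T$ descends to both $S$ and $T'$. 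In particular $S$ has a unique sequence, which by the spider argument forces its vines to be equal whenever $m\ge 3$, so $S$ is a labeled path ($m=2$) or a labeled generalized star with at least three vines ($m\ge 3$); and $T'$, having fewer vertices and a unique path sequence, is labelable as an element of $\mathcal F$ by the inductive hypothesis.

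The decisive and most delicate step is to match the label produced on $u$ by the labeling of $T'$ with the piece $S$. First I must rule out $d_T(u)=2$: were $u$ light with a single further neighbour, it would become a leaf of $T'$ and receive no label, but then a swapping argument, comparing a covering in which $v$ joins two of its own vines with one in which the bridge through $u$ is absorbed into a vine of $S$, yields two different path sequences, contradicting uniqueness. Hence $d_T(u)\ge 3$ and $u$ is a labeled vertex of $T'$. If $u$ is labeled $A$, I re-attach $S$ by a Type-1 operation when $m\ge 3$ and by a Type-2 operation when $m=2$ (attaching the labeled path $S$ at its internal vertex $v$), neither of which imposes a length constraint. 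If $u$ is labeled $B$, then $u$ is the first vertex of a vine, of length $s'$, of some generalized star $S'$ arising in the construction of $T'$; comparing the covering in which $e$ is unused with the one obtained by swapping $e$ against an edge at $v$ shows that the two sequences coincide only when the vines of $S$ are all equal and of length exactly $s'$, so uniqueness forces precisely the Type-3 matching condition and $S$ may be re-attached by a Type-3 operation. In every case $T$ is realized through one further operation applied to $T'\in\mathcal F$, completing the induction. The main obstacle throughout is this label-matching analysis via the swapping construction, and in particular showing that a $B$-labeled attachment point forces the peeled generalized star to share the vine length of the star it is attached to.
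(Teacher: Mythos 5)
Your overall strategy is the same as the paper's: induct, peel off the generalized star $S$ hanging at a deepest heavy vertex $v$, use Lemma \ref{lem3} to split $T$ into $S$ and $T'$, observe that uniqueness of the path sequence descends to both pieces, and then case on the label that the inductive labeling of $T'$ assigns to the attachment vertex $u$. The base case, the equal-vine (spider) argument, and your treatment of $u$ labeled $A$ versus $B$ match the paper's Cases 1 and 2 essentially verbatim.

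The genuine gap is your claim that $d_T(u)=2$ can be ruled out. It cannot, and the swapping comparison you describe does not produce distinct path sequences in general. Concretely, let $T$ be the generalized star with center $v$ and three vines $c_1c_2c_3$, $d_1d_2d_3$, $y_3y_2y_1$ (with $c_1,d_1,y_3$ adjacent to $v$), to which the path $x_1x_2x_3$ is attached by the edge $y_2x_2$ (a Type-2 operation at the $A$-vertex $y_2$); one checks that every minimum path covering of this $T$ has path sequence $(3,3,7)$, so the sequence is unique, yet rooting at $x_2$ makes $v$ the deepest heavy vertex and its parent $u=y_3$ has degree $2$. Your two coverings --- one in which $v$ joins two of its own vines, one in which the bridge through $u$ absorbs a vine of $S$ --- have sequences containing $\{|P'|,\,2s+1,\,s^{(m-2)}\}$ and $\{|P'|+s+1,\,s^{(m-1)}\}$ respectively, and these coincide precisely when $|P'|=s$, which is what happens here ($|P'|=s=3$). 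So the configuration $d_T(u)=2$ is compatible with a unique path sequence, and when it occurs the correct move is a different decomposition (in the example, the star at $v$ must be taken with $y_3y_2y_1$ as a third vine and the peeling performed on $x_1x_2x_3$ instead), not a contradiction. Note that the paper's own proof only treats $u$ labeled $A$ or $B$, i.e.\ $u$ a non-leaf of $T'$, and is silent on the leaf-parent configuration; you correctly noticed that this case needs attention, but your resolution is false, and the proof needs either an argument that $v$ can always be chosen so that $u$ is a non-leaf of $T'$, or an explicit handling of the leaf-parent case.
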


\begin{proof}
Suppose $T$ is  a tree with at least three vertices and  it admits
unique path sequence.  We use induction on $hv(T)$, the number of
heavy vertices in $T$, to prove a more general statement as follows:

\noindent(D1) $T$ can be labeled by $A$ and $B$ such that it is an
element of $\mathcal F$;

\noindent(D2) each vertex  labeled $A$ is an internal vertex in
every minimum path covering of $T$;

\noindent(D3) each vertex labeled $B$ is an end of the path, induced
by the vine containing this vertex in the  labeled generalized
star, in some minimum path covering of $T$.

The assertion is clearly true if $hv(T)=0,1$. We now assume that
$hv(T)\ge 2$ and the assertion holds for smaller values of $hv(T)$.

Let $r$ be an arbitrary vertex in $T$ and let $v$ be the heavy
vertex such that  $d_T(r,v)$ is largest. Then there is a unique
vertex, say $u$, with $d_T(r,u)=d_T(r,v)-1$. Let $e$ be the edge
incident to $u$ and $v$. By the selection of $v$, $v$ has at least
two neighbors different from $u$, each of which is one end of a
vine. By Lemma \ref{lem3}, $e$ is not used  in some minimum path
covering of $T$ and hence $P(T)=P(T-e)=P(T')+P(T'')=P(T')+d_T(v)-2$,
where $T'$($T''$, respectively) is the component containing $u$
($v$, respectively) of $T-e$. It is easy to see that $T'$ admits
unique path sequence since otherwise $T$ admits different path
sequences. Hence, by induction hypothesis,  (D1)-(D3) hold for the
tree $T'$. Suppose that neighbors of $v$ are $u,v_1,\ldots, v_k$,
where $k=d_T(v)-1\ge 2$. Let $S_i$ be the vine containing $v_i$  and
$e_i=vv_i$ for $i\in \{1, 2,\cdots,k\}$.

 \vskip 0.2cm

{\it Case 1. $u$ is labeled $A$ in the label tree $T'$.}

It implies that $u$ is an internal vertex in every minimum path
covering of $T'$.  If there is a path $P$ containing $e$ in a
minimum path covering of $T$, then by swapping $e$ and $e_i$, where $e_i$ is not contained in $P$, we get a new
minimum path covering of $T$, in which $e$ is not used.
Restricted to $T'$, this new minimum path covering induces a minimum
path covering of $T'$ such that $u$ is an end of a path. It
contradicts that $u$ is an internal vertex in every minimum path
covering of $T'$. Therefore, $e$ is not used in any minimum path
covering of $T$.

If $k=2$, then $T''$ is a path with at least three vertices, and
hence we label each non-leaf vertex of this path by $A$ and $T$ is
obtained from $T'$ by  one Type-2 operation. It is easy to check
that (D1)-(D3) hold for $T$ in this situation.

If $k\ge 3$, we claim that  $T''$ is a generalized star. Suppose to the
contrary that there are two vines, say $S_1$ and $S_2$, whose
lengths are different. Vines $S_2, S_4,\ldots, S_k$ and the path
induced by $S_1\cup S_3\cup \{v\}$ form a minimum path covering of
$T''$. Similarly, vines $S_1, S_4,\ldots,S_k$ and the path induced
by $S_2\cup S_3\cup \{v\}$ form another minimum path covering of
$T''$. Obviously, path sequences of these two minimum path covering
of $T''$ are different as $S_1$ and $S_2$ have different length,
i.e., $T''$ admits different path sequences, and hence $T$ admits
different path sequences, a contradiction. So, all vines in $T''$
have the same number of vertices and $T''$ is a general star. Then
we label vertices $v_1,\ldots,v_k$ by $B$ and label  other non-leaf vertices
in $T''$ by  $A$,  and hence $T$ is obtained from $T'$ by  one
Type-1 operation. It is easy to check that (D1)-(D3) hold for $T$ in
this situation since $e$ is not used in any minimum path covering of
$T$. \vskip 0.2cm

{\it Case 2. $u$ is labeled $B$ in the label tree $T'$.}

Since $u$ is labeled $B$ in the label tree $T'$, $u$ is an end of
the path, induced by the vine (say $S$) containing $u$  in the
 labeled generalized star, in some minimum path covering $\mathcal
{P'}$ of $T'$. Assume that there is a vine in $T''$, say $S_1$,
having different length with $S$.   Let $\mathcal {P''}$ be a
minimum path covering of $T''$ such that  $S_1\cup S_2\cup \{v\}$
forms a path in $\mathcal {P''}$. As $P(T)=P(T')+P(T'')$, $\mathcal
{P'}\cup \mathcal {P''}$ is a minimum path covering of $T$. Now by
swapping $e_1$ and $e$, we get a new path covering of $T$ in which $S\cup S_2\cup \{v\}$ forms a path.
It is
easy to find that it has different path sequence with $\mathcal
{P'}\cup \mathcal {P''}$, a contradiction. Hence,  both $S_i$ and
$S$ have the same number of vertices for all $i\in\{1,2,\cdots, k\}$. We now
label $T''$ as follows: all neighbors of $v$ in $T''$ are labeled by
$B$ and other non-leaf vertices in $T''$ are labeled by $A$.
Therefore, $T''$ is a labeled generalized star and $T$ is obtained from
$T'$ by one Type-3 operation. Using the swapping construction and
Lemmas \ref{lem1}, \ref{lem2}, \ref{lem3}, it is straightforward to
check that (D2)-(D3) hold for $T$.
\end{proof}

Theorem \ref{thm9} and Theorem \ref{thm10} tell us that a tree with
at least three vertices admits unique path sequence if and only if it is
an underlying tree in $\mathcal F$ (An underlying tree in $\mathcal F$ is a tree in $\mathcal F$ deleted its labels). Recall the discussion in the begin
of this section, we immediately obtain  a clear characterization for
trees, whose complements admit multiple island sequences.

\begin{theorem}\label{thm11}
Let $T$ be a tree which is neither a path nor a generalized star.
Then $T^c$ admits multiple  island sequences if and only if $T$ is
not an underlying tree in $\mathcal F$.
\end{theorem}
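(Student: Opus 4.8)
The plan is to obtain this theorem as an immediate consequence of the two preceding theorems together with the reduction recorded at the start of this section. That reduction says that $G^c$ admits multiple island sequences if and only if $G$ admits distinct path sequences and $P(G)\ge 2$. Taking $G=T$, the task therefore splits into disposing of the side condition $P(T)\ge 2$ and then translating ``$T$ admits distinct path sequences'' into (non-)membership in the family $\mathcal F$.

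First I would verify the side conditions implied by the hypothesis. Since $T$ is not a path, it must contain a heavy vertex and hence has at least four vertices; in particular it has at least three vertices, which is exactly what is required to apply Theorem \ref{thm10}. Moreover, a tree satisfies $P(T)=1$ if and only if it possesses a Hamiltonian path, which for a tree forces $T$ to be a path. As $T$ is not a path we obtain $P(T)\ge 2$ unconditionally, so the side condition appearing in the reduction is automatically satisfied throughout.

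Next I would invoke the equivalence stated just after Theorem \ref{thm10}: a tree on at least three vertices admits a unique path sequence if and only if it is an underlying tree in $\mathcal F$. Contrapositively, $T$ admits distinct path sequences if and only if $T$ is not an underlying tree in $\mathcal F$. Feeding this into the reduction, and using that $P(T)\ge 2$ holds automatically here, yields precisely that $T^c$ admits multiple island sequences if and only if $T$ is not an underlying tree in $\mathcal F$, which is the desired conclusion.

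Since every ingredient has already been established, I expect no genuine obstacle in this argument; the only point demanding care is the verification of the two side conditions ($|V(T)|\ge 3$ and $P(T)\ge 2$), both of which are forced by the single hypothesis that $T$ is not a path, so that Theorem \ref{thm10} and the reduction may be applied without exception.
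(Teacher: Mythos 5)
Your proposal is correct and follows essentially the same route as the paper: Theorem \ref{thm11} is obtained by combining Theorems \ref{thm9} and \ref{thm10} with the reduction stated at the start of Section 4 (that $G^c$ admits multiple island sequences iff $G$ admits distinct path sequences and $P(G)\ge 2$). Your explicit verification of the side conditions $|V(T)|\ge 3$ and $P(T)\ge 2$ from the hypothesis that $T$ is not a path is exactly the (implicit) content of the paper's one-line derivation.
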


As we know, if a tree $T$  is neither a star nor a path, then $T^c$
is connected. Hence, Theorem \ref{thm11} settles  Question
\ref{que1} posed by Georges and Mauro \cite{gm2005}. It is obvious
that $2$-sparse trees are not underlying trees in $\mathcal F$.
Therefore, Theorem \ref{thm2} (cf. Theorem 2.8 in \cite{attw2010})
is a  corollary of Theorem \ref{thm11}.

Based on Theorem \ref {thm11} and Lemmas \ref{lem2}, \ref{lem3}, we have
the following algorithm to determine whether its complement of a given tree $T$ has
unique island sequence.

\vspace{4mm}
\textbf{Algorithm DUIS.} Determine whether the complement $T^c$
has unique island  sequence for a given tree $T$. 

\textbf{Input.} A tree $T$.

\textbf{Method.}

\textbf{Step 0:}~\parbox[t]{29em}{Initializes the labels of all
vertices $v\in V(T)$ with $f(v)=0$ and $\ell(v)=O$;}\vspace{8pt}

\textbf{Step 1:}~\parbox[t]{29em}{If~\parbox[t]{28em}{$T$ is a path
$P_k$ with $k \geq 1$, then

If~\parbox[t]{27em}{there is a leaf or isolated vertex $x$ with
$l(x)=A$ or $0<f(x)\neq k$, then

go to \textbf{Step 7};}\vspace{8pt}

else\vspace{8pt}

\hspace{1em} \parbox[t]{24em}{Output ``$T^c$ has unique island
sequence",~and stop.}\vspace{8pt}

endif}\vspace{8pt}

endif}\vspace{8pt}

\textbf{Step 2:}~\parbox[t]{29em}{Let $r$ be a leaf of $T$. $v$ is
the heavy vertex such that $d_T(v,r)$ is the largest and $u$ is the
vertex with $d_T(u,r)=d_T(v,r)-1$. let $e=uv$ and let $T'$~($T''$,
respectively) be the connected component of $T-e$ containing
$u$~($v$, respectively).}\vspace{8pt}

\textbf{Step 3:}~\parbox[t]{26em}{If~\parbox[t]{24em}{$T''$ is
neither a path nor a generalized star, then

go to \textbf{Step 7};}\vspace{8pt}

endif}\vspace{8pt}

\textbf{Step 4:}~\parbox[t]{29em}{If~\parbox[t]{28em}{$T''$ is a
generalized star $S$, let $k$ be the number of vertices of a vine of
$S$.

If~\parbox[t]{27em}{(There is a neighbor $x$ of $v$ in $S$ with
$\ell(x)=A$ or $0<f(x)\neq k$) or (There is a leaf $x$ in $S$ with
$\ell(x)=A$ or $f(x)>0$), then

go to \textbf{Step 7};}\vspace{8pt}

endif\vspace{8pt}

If~\parbox[t]{26em}{$0<f(u)\neq k$, then

$\ell(u):=A$;

$f(u):=0$;}\vspace{8pt}

endif\vspace{8pt}

If~\parbox[t]{26em}{$f(u)=0$ and $\ell(u)=O$, then

$f(u):=k$}\vspace{8pt}

endif\vspace{8pt}

$T:=T'$;

go back to \textbf{Step 1};}\vspace{8pt}

endif}\vspace{8pt}

\textbf{Step 5:}~\parbox[t]{29em}{If~\parbox[t]{28em}{$T''$ is a
path $P_k (k\geq 4)$ such that $v$ is not the middle vertex in
$P_k$, then

If~\parbox[t]{27em}{there is a leaf $x$ in $T''$ with $\ell(x)=A$ or
$0<f(x)\neq k$, then

go to \textbf{Step 7};}\vspace{8pt}

else\vspace{8pt}

\hspace{1em}\parbox[t]{22em}{$\ell(u):=A$;

$f(u):=0$;

$T:=T'$

go back to \textbf{Step 1};}\vspace{8pt}

endif}\vspace{8pt}

endif}\vspace{8pt}

\textbf{Step 6:}~\parbox[t]{29em}{If~\parbox[t]{28em}{$T''$ is a
path $P_k (k\geq 3)$ such that $v$ is the middle vertex in $P_k$,
let $v_1$ and $v_2$ be two neighbors of $v$ in $T''$,

If~\parbox[t]{27em}{there is a leaf $x$ in $T''$ with $l(x)=A$ or
$0<f(x)\neq k$, then

go to \textbf{Step 7};}\vspace{8pt}
endif\vspace{8pt}

If~\parbox[t]{27em}{(There is a vertex $x\in\{u,v_1,v_2\}$ with
$\ell(x)=A$ or $0<f(x)\neq \frac{k-1}{2}$) or (There is a leaf $x$ in
$T''$ with $f(x)=k$),then

$\ell(u):=A$;

$f(u):=0$;}\vspace{8pt}

else\vspace{8pt}

\hspace{1em}\parbox[t]{23em}{$f(u):=\frac{k-1}{2}$;}\vspace{8pt}

endif\vspace{8pt}

$T:=T'$;

go back to \textbf{Step 1};}\vspace{8pt}

endif} \vspace{8pt}

\textbf{Step 7:}~Output ``$T^c$ has multiple  island sequences", and
stop.

\vspace{15pt}
In \textbf{Algorithm DUIS}, two labels on  each vertex $v$, denoted by  $\ell (v)$, $f(v)$, are used.  For convenience,
we first initializes the labels  with   $\ell(v)=O$ and $f(v)=0$ for all vertices $v\in V(T)$.
In Steps 4, 5, 6 of \textbf{Algorithm DUIS}, when the value $A$ is assign to $\ell(u)$, it implies that $u$ must be an internal vertex in  any minimum path covering of $T'$
if $T$ has unique path sequence; When  a positive integer $k$ is assign to $f(u)$, it implies that $P$ must have $k$ vertices
if $T$ has unique path sequence and $u$ is an end of a path $P$ in some minimum path covering of $T'$.  Hence, once $A$ is assign to $\ell(u)$ in Steps 4, 5 and  6, we take $f(u)=0$.
In Step 3, if $T''$ is neither a path nor a generalized star, by Theorem \ref{thm11}, the algorithm outputs ``$T^c$ has multiple island sequences".
In Steps 4, 5 and 6,  the algorithm first checks two labels of each end  of vines of $S$ (if $T''$ is a generalized star $S$ ) or each leaf of path $P_k$
(if $T''$ is a path $P_k$) to determine whether the algorithm output `` $T^c$ has multiple island sequence"; Then determine whether we need to change two labels of $u$.
At last, let $T=T'$ go back to Step 1.
In Step 1, we consider a path $P_k$ with $k\ge 1$. If there is a leaf or isolated vertex $x$ with
$l(x)=A$ or $0<f(x)\neq k$, it is obvious that $T$ has no unique path sequence.   
In fact, since $d_T'(r,v)=d_T(r,v)$ for each vertex $v\in V(T')$, where $r$ is the  selected leaf in Step 2,  Step 2 need to perform only one time.
Note that every vertex of $T$ are used in a constant number in \textbf{Algorithm DUIS}.
Hence, It is easy to see that \textbf{Algorithm DUIS} has complexity $O(|V(T)|)$.

\begin{theorem}
\textbf{Algorithm DUIS} can determine whether its complement $T^c$  of a given tree $T$ has unique island sequence in $O(|V(T)|)$ time.
\end{theorem}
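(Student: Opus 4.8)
The plan is to separate the statement into correctness and running time, and to establish correctness by exhibiting \textbf{Algorithm DUIS} as a top-down implementation of the decomposition used in the proof of Theorem~\ref{thm10}. By the remarks opening Section~4 together with Theorem~\ref{thm11}, for a tree $T$ on at least three vertices that is neither a path nor a generalized star, $T^c$ has a unique island sequence exactly when $T$ admits a unique path sequence, and by Theorems~\ref{thm9} and~\ref{thm10} this happens exactly when $T$ is an underlying tree of $\mathcal F$; paths, generalized stars, and trees on fewer than three vertices have a unique path sequence outright, and these base cases are precisely what Step~1 tests. Hence it suffices to prove that the algorithm reaches Step~7 if and only if $T$ is \emph{not} an underlying tree of $\mathcal F$. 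I read the two labels as obligations on the vertices of the current tree: $\ell(u)=A$ encodes ``$u$ must be internal in every minimum path covering'' (condition (C2)/(D2)), and $f(u)=k>0$ encodes ``$u$ is a $B$-vertex and the vine containing it has $k$ vertices'' (condition (C3)/(D3)).

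I would prove correctness by induction on the number of heavy vertices, in lock-step with Theorem~\ref{thm10}. In each pass the algorithm selects the farthest heavy vertex $v$, its neighbour $u$ towards $r$, and the component $T''$ of $T-uv$ containing $v$; by the choice of $v$ everything of $T''$ beyond $v$ is light, so $T''$ is a spider, and Lemma~\ref{lem3} lets us drop $e=uv$, giving $P(T)=P(T')+P(T'')$. Step~3 rejects exactly when $T''$ is neither a path nor a generalized star, which is correct by the dichotomy inside the proof of Theorem~\ref{thm10}: unequal vines yield two minimum path coverings with different sequences. The remaining cases match the three operations---a genuine star is a Type-1 or Type-3 attachment (Step~4), a non-central path is a Type-2 attachment (Step~5), and a central path is a two-vine Type-1 or Type-3 attachment (Step~6). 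For each case I would check two things: (i) the tests on the ends of the vines of $T''$ (its leaves and the neighbours of $v$) coincide with the consistency conditions under which those vertices may already carry the correct $B$-obligation, so that a failed test genuinely produces a second path sequence; and (ii) the subsequent update of $\ell(u),f(u)$ records on $T'$ exactly the obligation that Theorem~\ref{thm10} forces on $u$---label $A$ in its Case~1, label $B$ with matching vine length in its Case~2. The inductive invariant to carry is that the labels on the current tree encode precisely the $\mathcal F$-membership constraints inherited from the already-stripped part; both implications then follow.

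The main obstacle is the label bookkeeping across passes: justifying that examining only the neighbours of $v$ and the leaves of $T''$, never the interior of a vine, loses nothing. I would prove the structural invariant that a vertex is labelled only by having been an attachment vertex of an earlier pass; an $A$-marked vertex is then obliged only to be internal, which is automatic once it lies inside a vine, so skipping it is harmless, whereas a $B$-marked vertex (with $f>0$) is a neighbour of the centre of its generalized star---necessarily heavy in $T$---and is therefore inspected in the very pass that removes that star. The genuinely delicate configuration is an attachment vertex that is heavy in $T$ yet is demoted to a light degree-two vertex once its outer star is stripped: I must rule out that it carries a live $B$-obligation while lying interior to a later vine, showing instead that any such clash was already rejected at the earlier pass, so that no unchecked conflict survives. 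Equally delicate is that the assignment $f(u):=k$ in Steps~4 and~6 is only \emph{tentative}, committing to the Type-3 reading; I must verify that a later pass either confirms it or retracts it to $\ell(u)=A$ through the $0<f(u)\neq k$ branch, and that this retraction neither hides an obstruction for a tree outside $\mathcal F$ nor fabricates one for a tree inside it.

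For the running time, a single search from the leaf $r$ computes all distances $d_T(r,\cdot)$; since removing $T''$ leaves these distances unchanged on $T'$, Step~2 need run only once and the heavy vertices can be handled in one sweep from farthest to nearest. Each pass touches only $v$, the edge $uv$, the neighbours of $v$, and the ends of the vines of $T''$, and since the components $T''$ are pairwise vertex-disjoint and each vertex is examined as a neighbour or a vine-end a bounded number of times, the total cost is $O(|V(T)|)$.
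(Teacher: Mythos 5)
Your proposal follows essentially the same route as the paper's own (quite informal) justification: correctness is reduced via Theorem~\ref{thm11} to membership of $T$ in $\mathcal F$, the passes of the algorithm are matched inductively against the decomposition in the proof of Theorem~\ref{thm10} with the labels $\ell(\cdot)$ and $f(\cdot)$ encoding the (C2)/(C3)-type obligations, and the linear running time comes from running Step~2 once and touching each vertex a bounded number of times. In fact you identify the delicate invariants (the tentative $f(u):=k$ assignment, the harmlessness of skipping vine interiors, the demoted heavy vertex) more explicitly than the paper, which leaves them unaddressed; to turn your plan into a complete argument you would still need to discharge those flagged obligations, but nothing in your outline conflicts with the paper's reasoning.
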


\section{Some invariants of graphs and their complements}

In this section we extend some  results in Section 3 to more general
graphs.

  In \cite{attw2010}, Adams et. al determined the path covering number of
  connected non-cycle $2$-sparse graphs. We now extend their result.

\begin{theorem}\label{thm12}
Let $G$ be a  connected non-cycle graph with $m\ge 1$ edges, $n$
vertices, $h$ heavy edges, and $\ell$ leaves. If  every heavy vertex
in $G$ is adjacent to at least three light vertices, then
$P(G)=\ell+m-h-n$.
\end{theorem}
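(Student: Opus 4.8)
The plan is to induct on the number $h$ of heavy edges, peeling off one heavy edge at a time via Lemma \ref{lem4} while checking that the quantity $\ell+m-h-n$ is preserved at each step. Because deleting a heavy edge can disconnect $G$, I would first restate the result in a form that permits disconnected graphs: if $G$ is any graph with $m\ge 1$ edges, $n$ vertices, $h$ heavy edges and $\ell$ leaves, in which every heavy vertex has at least three light neighbours and no component is a cycle or an isolated vertex, then $P(G)=\ell+m-h-n$, where $P$ of a disconnected graph is the sum of $P$ over its components. The theorem as stated is the connected case.

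For the base case $h=0$ the graph is $2$-sparse, so every component $C_i$ is connected, $2$-sparse, not a cycle, and has at least one edge; Theorem \ref{thm4} gives $P(C_i)=\ell_i+m_i-n_i$, and summing over components yields $P(G)=\ell+m-n=\ell+m-h-n$.

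For the inductive step I would select a heavy edge $e=uv$. The single observation driving everything is that $u$ and $v$ are heavy and each has at least three light neighbours, while being adjacent to the heavy vertex at the other end of $e$; hence $\deg u\ge 4$ and $\deg v\ge 4$, so in $G-e$ both endpoints still have degree at least $3$ and keep all of their light neighbours. From this I extract four facts: (i) Lemma \ref{lem4} applies, giving $P(G)=P(G-e)$; (ii) the hypothesis ``every heavy vertex has at least three light neighbours'' survives in $G-e$, since only $u,v$ change degree and they remain heavy with the same light neighbours; (iii) passing to $G-e$ lowers $m$ and $h$ each by exactly one (no other edge changes its heaviness because $u,v$ stay heavy) and leaves $n$ and $\ell$ unchanged (no vertex becomes a leaf or is isolated), so $\ell+m-h-n$ is unchanged and $G-e$ still has at least one edge; and (iv) $G-e$ has no cycle component and no isolated vertex, because any component split off by deleting $e$ contains $u$ or $v$ with degree $\ge 3$, and every other component is untouched. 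The inductive hypothesis then gives $P(G)=P(G-e)=\ell+(m-1)-(h-1)-n=\ell+m-h-n$.

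The main obstacle is the bookkeeping forced by disconnection: one must be certain that stripping heavy edges never creates a cycle component---which would falsify the formula, since a cycle $C_k$ has $P=1$ but $\ell+m-n=0$---nor an isolated vertex. This is precisely where the bound $\deg u\ge 4$ is indispensable. The remaining work is the routine verification of the four invariants above, which I would present as brief checks rather than detailed computation.
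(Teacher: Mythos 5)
Your proposal is correct and follows essentially the same route as the paper: induct on $h$, use Theorem \ref{thm4} for the base case $h=0$, and peel off a heavy edge via Lemma \ref{lem4}, checking that $\ell+m-h-n$ is invariant. The only cosmetic difference is that you generalize the statement to disconnected graphs before inducting, whereas the paper keeps the statement for connected graphs and applies the induction hypothesis to the (at most two, connected) components of $G-e$; your explicit observation that $\deg u,\deg v\ge 4$ is exactly the point the paper invokes, more tersely, to rule out cycle components and preserve the hypothesis.
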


\begin{proof}
The proof proceeds by induction on $h$, the number of heavy edges in
$G$. If $h=0$, then $G$ is $2$-sparse. By Theorem \ref{thm4} (cf.
Theorem 3.2 in \cite{attw2010}), We know that $P(G)=\ell+m-n$. Let
us assume that $h>1$ and that the result holds for any connected
non-cycle graph with $k$ ($1\le k<h$) heavy edges if its each heavy
vertex has at least three light neighbors. Consider $G$ a
connected non-cycle graph with $m$ edges, $n$ vertices, $h$ heavy
edges, and $\ell$ leaves. Assume that each heavy vertex in $G$ has at least three light neighbors.
 Let $e=uv$ be a heavy
edge in $G$ and let $G_1,\ldots,G_t$ be connected components of
$G-e$, where $1\le t\le 2$. By Lemma \ref{lem4},
$P(G)=P(G-e)=\sum\limits_{i=1}^{t} P(G_i)$. Since $u$ ($v$,
respectively) is adjacent to at least three light vertices, each
$G_i$ is not a cycle and  $G_i$ satisfies the induction hypothesis,
and hence $P(G_i)=\ell_i+m_i-h_i-n_i$, where $m_i$ ($n_i$, $h_i$,
$\ell_i$, respectively) is the number of edges (vertices, heavy
vertices, leaves, respectively) in $G_i$. Note that the following
equalities hold:

$$\sum\limits_{i=1}^{t}\ell_i=\ell \ \ \ \ \mbox{and} \ \ \ \sum\limits_{i=1}^{t}m_i=m-1;$$
$$\sum\limits_{i=1}^{t}n_i=n \ \ \ \ \mbox{and}\ \ \ \ \sum\limits_{i=1}^{t}h_i=h-1.$$

Therefore, $P(G)=\ell+m-h-n$ and the result follows.
\end{proof}

  Similarly, the following result is a direct corollary of Theorem
 \ref{thm12}, Theorem \ref{thm1} and Theorem \ref{thm}.

\begin{corollary}
Let $G$ be a  connected non-cycle graph with $m\ge 1$ edges, $n$
vertices, $h$ heavy edges, and $\ell$ leaves. If  every heavy vertex
in $G$ is adjacent to at least three light vertices and $\ell+m\ge
h+n+2$, then $\lambda(G^c)=\ell+m-h-2$ and $\rho(G^c)=\ell+m-h-n-1$.
\end{corollary}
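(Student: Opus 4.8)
The plan is to treat this as a substitution exercise built on the three quoted theorems, the only subtlety being to apply Theorem~\ref{thm1} and Theorem~\ref{thm} to the \emph{complement} $G^c$ rather than to $G$ itself. First I would invoke Theorem~\ref{thm12}: since $G$ is a connected non-cycle graph in which every heavy vertex is adjacent to at least three light vertices, we have $P(G)=\ell+m-h-n$. The hypothesis $\ell+m\ge h+n+2$ then immediately gives $P(G)=\ell+m-h-n\ge 2$, which is the inequality needed to enter the nontrivial cases of Theorems~\ref{thm1} and~\ref{thm}.

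Next I would apply Theorem~\ref{thm1} with the graph taken to be $H:=G^c$, noting that $H$ has the same $n$ vertices as $G$ and that $H^c=(G^c)^c=G$. Because $P(H^c)=P(G)\ge 2$, part~(2) of Theorem~\ref{thm1} applies and yields $\lambda(G^c)=\lambda(H)=n+P(H^c)-2=n+P(G)-2$. Substituting $P(G)=\ell+m-h-n$ gives $\lambda(G^c)=n+(\ell+m-h-n)-2=\ell+m-h-2$, the first claimed value.

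For the hole index I would verify the single hypothesis of Theorem~\ref{thm}, namely $\lambda(H)\ge n-1$: since $P(G)\ge 2$ we have $\lambda(G^c)=n+P(G)-2\ge n\ge n-1$, so Theorem~\ref{thm} is applicable to $H=G^c$. It then gives $\rho(G^c)=\rho(H)=P(H^c)-1=P(G)-1=(\ell+m-h-n)-1=\ell+m-h-n-1$, the second claimed value, completing the argument.

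The computation itself is entirely routine; the only point demanding care — and what I would regard as the ``main obstacle'' in the bookkeeping sense — is to consistently keep the roles straight, applying both Theorem~\ref{thm1} and Theorem~\ref{thm} to $G^c$ so that the quantity $P(H^c)$ appearing in their statements is $P(G)$, and to confirm that the hypothesis $\ell+m\ge h+n+2$ is exactly what guarantees $P(G)\ge 2$, legitimizing the use of part~(2) of Theorem~\ref{thm1} and the condition $\lambda\ge n-1$ of Theorem~\ref{thm}.
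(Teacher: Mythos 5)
Your argument is correct and is exactly the route the paper intends: the paper gives no explicit proof, stating only that the corollary follows directly from Theorem~\ref{thm12}, Theorem~\ref{thm1} and Theorem~\ref{thm}, and your substitution of $P(G)=\ell+m-h-n$ into Theorem~\ref{thm1}(2) and Theorem~\ref{thm} applied to $G^c$ (with the hypothesis $\ell+m\ge h+n+2$ supplying $P(G)\ge 2$ and $\lambda(G^c)\ge n-1$) is precisely that derivation, carried out correctly.
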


A {\it complete graph } $K_n$ is a graph of order $n\ge 2$ in which
every two vertices are adjacent. A vertex $v\in V(G)$ is a {\it
cut-vertex} if deleting $v$ and all edges incident to it increases
the number of connected components. A {\sl block} of $G$ is a
maximal connected subgraph of $G$ without cut-vertex. A {\it block
graph} is a connected graph whose blocks are complete graphs. If
every block is $K_2$, then it is a tree. 

Given a nontrivial tree $T$, $\mathcal {G}(T)$ is defined as a
family of block graphs obtained from $T$ by expanding each edge of
$T$ into a complete graph of arbitrary order. It is obvious that $T\in \mathcal {G}(T)$.
Figure \ref{fig5} illustrates a graph $G$ obtained from $P_3$ by expanding
edges $e$, $f$ into $K_3$, $K_4$, respectively.

\begin{figure}[!htbp]
\centering
\begin{tikzpicture}[
  vertex/.style={inner sep=1pt,circle,draw}]
\path node[vertex,label=90:$u$](1) at(90:1.3) {}
      node[vertex,label=270:$w$](2) at(270:1.3) {}
      node[vertex,label=180:$v$](3) at(0:0) {};
\path[xshift=5cm]
      node[vertex,label=90:$u$](5) at(90:1.3) {}
      node[vertex,label=270:$w$](7) at(270:1.84) {}
      node[vertex,label=0:$v$](4) at(0:0) {}
      node[vertex](6) at(-45:1.3) {}
      node[vertex](8) at(225:1.3) {}
      node[vertex](9) at(135:1.3) {};
\draw (1) -- (3) -- (2)
      (4) -- (6) -- (7) -- (8) -- (4) -- (5) -- (9) -- (4) -- (7)
      (6) -- (8);
\node at($(3)!.5!(4)$) {$\Longrightarrow$};
\end{tikzpicture}
\caption{}\label{fig5}
\end{figure}

Now we give a result to extend the result in Theorem \ref{thm3} (cf.
Theorem 2.4 in \cite{attw2010}).

\begin{theorem}\label{thm13}

Let $T$ be a $2$-sparse tree with $\ell\ge 2$ leaves. Then
$P(G)=\ell-1$ for any graph $G\in \mathcal{ G }(T)$.
\end{theorem}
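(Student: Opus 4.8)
The plan is to prove the equality by establishing the two inequalities $P(G)\ge \ell-1$ and $P(G)\le \ell-1$ separately. Throughout, write $c(H)$ for the number of connected components of a graph $H$, and recall two basic facts that I will lean on: every complete graph $K_t$ with $t\ge 2$ admits a Hamiltonian path between any prescribed pair of its vertices; and, by Theorem \ref{thm3} (equivalently Theorem \ref{thm6}), $P(T)=\ell-1$. I will also use the elementary vertex-deletion bound: deleting a set $S\subseteq V(H)$ from a path covering of $H$ with $P(H)$ paths breaks it into at most $P(H)+|S|$ subpaths, each contained in a single component of $H-S$, so that $P(H)\ge c(H-S)-|S|$.

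For the lower bound I would take $S$ to be the set of heavy vertices of $T$, viewed as a subset of $V(G)$. Since $T$ is $2$-sparse, its heavy vertices are pairwise non-adjacent, so no edge $xy$ of $T$ has both ends in $S$; consequently, when $S$ is deleted, every clique $Q_{xy}$ of $G$ loses at most one vertex and stays connected, while distinct cliques of $G$ meet only in the original vertex they share. Hence the components of $G-S$ correspond exactly to those of $T-S$, giving $c(G-S)=c(T-S)$. Because $S$ is an independent set in the tree $T$, we have $c(T-S)=1-|S|+\sum_{v\in S}d_T(v)$, and the global tree identity $\sum_{v}(d_T(v)-2)=-2$, in which leaves contribute $-\ell$ and degree-$2$ vertices contribute $0$, yields $\sum_{v\in S}(d_T(v)-2)=\ell-2$. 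Combining these gives $c(G-S)-|S|=1+\sum_{v\in S}(d_T(v)-2)=\ell-1$, and therefore $P(G)\ge \ell-1$.

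For the upper bound I would lift a minimum path covering $\mathcal P$ of $T$, which consists of $\ell-1$ paths. For each edge $xy$ used by a path of $\mathcal P$, I reroute that path through the entire clique $Q_{xy}$, replacing the step from $x$ to $y$ by a Hamiltonian path of $Q_{xy}$ between $x$ and $y$; this absorbs all of $Q_{xy}$ without changing the number of paths, and is consistent at an original vertex $x$ lying on two used edges, where the two Hamiltonian paths simply meet at $x$. The remaining task is to cover the extra vertices $Q_{xy}\setminus\{x,y\}$ of each clique coming from an \emph{unused} edge $xy$. Here $2$-sparseness is decisive: if $xy$ were unused with both $x$ and $y$ internal to their paths, then each of $x,y$ would carry two used edges together with the unused edge $xy$, forcing $d_T(x),d_T(y)\ge 3$ and making $xy$ a heavy edge, which a $2$-sparse tree forbids. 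Thus every unused edge has an endpoint that is an end (or a singleton) of its path, and I extend that path from this endpoint along a Hamiltonian path of the clique with the other endpoint removed, covering the extra vertices without creating a new path; a singleton endpoint can even be threaded through two such cliques in a single path.

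The main obstacle is the bookkeeping in this last step, since one path-end could be demanded by several unused cliques at once. I would resolve it by a capacity argument that is once more powered by $2$-sparseness: a path-end of path-degree $1$ is incident to at most one unused edge whose far endpoint is internal, and a singleton to at most two, and in each case the endpoint's absorbing capacity (one clique for an ordinary end, two for a singleton) exactly matches the forced demand, while every unused edge with both ends free may be assigned to either. Checking that all these assignments can be realized simultaneously is a short Hall-type degree count, and is the only delicate point; once it is done, all vertices of $G$ are covered by exactly $\ell-1$ paths, which gives $P(G)\le \ell-1$ and, with the lower bound, completes the proof.
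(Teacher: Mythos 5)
Your proposal is correct, and its core (the upper bound) is essentially the paper's argument: lift a minimum path covering of $T$, which has $\ell-1$ paths by Theorem \ref{thm3}, route every used edge $xy$ through a Hamiltonian path of its clique, and absorb the extra vertices of each unused clique at a free endpoint of an existing path. The two places where you deviate are worth noting. First, for the lower bound the paper simply asserts $P(G)\ge P(T)=\ell-1$ (which one justifies by projecting a path covering of $G$ onto $V(T)$), whereas you prove $P(G)\ge\ell-1$ directly from the deletion bound $P(G)\ge c(G-S)-|S|$ with $S$ the set of heavy vertices; your computation $c(G-S)-|S|=1+\sum_{v\in S}(d_T(v)-2)=\ell-1$ is correct (independence of $S$ gives $c(T-S)=1-|S|+\sum_{v\in S}d_T(v)$, and $c(G-S)=c(T-S)$ because each clique loses at most one vertex), and it is more self-contained than the paper's one-liner. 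Second, the Hall-type simultaneous-assignment check you defer is not actually needed: in a minimum path covering of a $2$-sparse tree every unused edge $xy$ has a light endpoint (two heavy endpoints are excluded by $2$-sparseness, and if both endpoints were light and free the two paths could be merged, contradicting minimality); that light endpoint is automatically free, since at most one of its at most two tree-edges is used; and a light free vertex has at most as many unused incident edges as it has free sides (one for an ordinary path-end, two for a singleton). Hence the greedy rule ``assign every unused clique to a light endpoint of its edge'' already respects all capacities, which is exactly the assignment the paper makes. With that observation in place of the deferred degree count, your proof is complete.
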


\begin{proof}
Let $G\in \mathcal{ G }(T)$. As $G$ is an expansion of $T$, it is
easy to see that $P(G)\ge P(T)=\ell-1$. Consider an arbitrary
minimum path covering of $T$, we will use this minimum path covering
to construct a path covering of $G$ with exactly $\ell-1$ paths,
which will imply that $P(G)=\ell-1$. Let $e=uv$ be an arbitrary
edge of $T$. Assume that the vertices of the block of $G$ replacing
$e$ are $u,x_1,\ldots,x_t, v$ (For convenience,  two cut-vertices in
this block are still called as $u,v$). We now construct a path covering of
$G$ as follows: If $e$ is contained in a path $P=\cdots uv\cdots$ in
this minimum path covering of $T$, then we construct a path
$P'=\cdots ux_1\ldots x_tv\cdots$; If $e$ is not contained in any
path in this minimum path covering of $T$, by Lemma \ref{lem1}, one
of $u,v$ is a heavy vertex. Without loss of generality, we assume
that $v$ is a heavy vertex. As $T$ is $2$-sparse, we know that $u$
is a light vertex in $T$ and hence $u$ is one end of a path, say $P$, in this
minimum path covering of $T$. Then we construct a path $P'=Px_1\cdots x_t$.
\end{proof}

Our final corollary below determines the $\lambda$ and $\rho$ of
complements of graphs satisfying the condition of Theorem
\ref{thm13}.

\begin{corollary}
Let $T$ be a $2$-sparse tree with $\ell\ge 3$ leaves. Then
$\lambda(G^c)=n+\ell-3$ and $\rho(G^c)=\ell-2$ for any $G\in
\mathcal{ G }(T)$, where $n$ is the order of $G$.
\end{corollary}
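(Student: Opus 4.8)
The plan is to derive this corollary exactly as the earlier corollaries in the paper were obtained, by feeding the path-covering count from Theorem~\ref{thm13} into the two structural bridges Theorem~\ref{thm1} and Theorem~\ref{thm}. The only conceptual point requiring attention is the bookkeeping of which graph plays the role of ``$G$'' in each cited theorem: since the invariants we want concern $G^c$, I would apply Theorem~\ref{thm1} and Theorem~\ref{thm} to the graph $G^c$ itself, whose complement is $(G^c)^c = G$. So I first fix an arbitrary $G \in \mathcal{G}(T)$ of order $n$ and record, via Theorem~\ref{thm13}, that $P(G) = \ell - 1$. The hypothesis $\ell \ge 3$ is precisely what guarantees $P(G) = \ell - 1 \ge 2$, which places us in case (2) of Theorem~\ref{thm1} rather than case (1).

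For the value of $\lambda(G^c)$, I would apply part (2) of Theorem~\ref{thm1} to the graph $G^c$ (on $n$ vertices). Since $P((G^c)^c) = P(G) \ge 2$, the theorem gives $\lambda(G^c) = n + P(G) - 2 = n + (\ell - 1) - 2 = n + \ell - 3$, which is the first claimed formula.

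For $\rho(G^c)$, I would first check the hypothesis of Theorem~\ref{thm}, namely $\lambda(G^c) \ge n - 1$. This holds because $\lambda(G^c) = n + \ell - 3 \ge n - 1$ whenever $\ell \ge 2$, and in particular under $\ell \ge 3$. Theorem~\ref{thm}, applied to $G^c$, then yields $\rho(G^c) = P((G^c)^c) - 1 = P(G) - 1 = \ell - 2$, completing the proof.

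There is essentially no analytic obstacle in this argument; all of the genuine combinatorial work has already been carried out in Theorem~\ref{thm13}, and what remains is purely arithmetic substitution. The only steps needing care are the two routine sanity checks, $P(G) \ge 2$ and $\lambda(G^c) \ge n - 1$, which license the use of Theorem~\ref{thm1}(2) and Theorem~\ref{thm} respectively; both follow immediately from the standing assumption $\ell \ge 3$, and it is exactly this assumption (rather than the weaker $\ell \ge 2$ of Theorem~\ref{thm13}) that the corollary requires.
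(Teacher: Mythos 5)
Your proposal is correct and follows exactly the route the paper intends: the paper states this as a direct consequence of Theorem~\ref{thm13} combined with Theorem~\ref{thm1}(2) and Theorem~\ref{thm}, applied to $G^c$, and your substitution $P(G)=\ell-1\ge 2$ together with the check $\lambda(G^c)=n+\ell-3\ge n-1$ is precisely the required bookkeeping. Nothing is missing.
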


At the end of this section, we shall point out that it is
interesting to establish the path covering numbers of general
$2$-sparse graphs.

\section{Conclusions}

In this paper, we determined the path covering number of  some
families of trees and  tree-like graphs. Additionally, we determined
$\lambda$ and $\rho$ for its complements. We  also established a
constructive characterization  for trees whose complements admit
unique island sequences. A linear-time algorithm was also given to determine
whether the complement of a given tree $T$ admits unique island sequence. Hence,
 an open question in \cite{gm2005}
was answered. Our work generalized most of results in
\cite{attw2010}. We hope these results will be extended to include
more general trees and graphs.

 \frenchspacing

\end{document}